\newtheorem{theorem}{\textbf{Theorem}}[chapter]
\newtheorem{definition}{\textbf{Definition}}[chapter]
\newtheorem{lemma}[theorem]{Lemma}
\newcommand*{\defeq}{\mathrel{\vcenter{\baselineskip0.5ex \lineskiplimit0pt 
                     \hbox{\scriptsize.}\hbox{\scriptsize.}}}% 
                     =} 
\begin{document}

% Preliminary pages are always loaded (TOC, CV, etc.)
\preliminarypages

% Include the different components of your thesis, in separate files.
% Using \include allows you to set \includeonly above.
\chapter*{Introduction}
\addcontentsline{toc}{chapter}{Introduction}

\section{History and Motivation}

For a given polynomial $f(x)$ over a finite field $\mathbb{F}_q$, let $V_f \defeq $ Im$(f) $ denote the value set of $f$. Determining the cardinality and structure of the value set is a problem with a rich history and wide variety of uses in number theory, algebraic geometry, coding theory and cryptography.

\vspace{1pc}

Relevant to this paper are theorems which provide upper bounds on the cardinality of our value set when $f(x)$ is not a permutation polynomial.\footnote{Permutation polynomials have also been studied extensively in literature, in view of their application to cryptography and combinatorics. For more information about other ways value sets have been studied historically, please refer to \cite{Hill}.} Let $f(x) \in \mathbb{F}_q[x]$ be a single variable polynomial of degree $d > 0$ with $|V_f| < q$. Using the Chebotarev density theorem over rational function fields, S. D. Cohen proved in \cite{Cone} that there is a finite set of rational numbers $T_d \subset [0,1]$ (depending on degree $d$) such that \begin{eqnarray} \label{Cohen} |V_f| = c_fq + O_d(\sqrt{q}) \end{eqnarray} for some $c_f \in T_d$ depending on Gal($f(x) - t$)/$\mathbb{F}_q(t)$ and Gal($f(x) - t$)/$\overline{\mathbb{F}}_q(t)$. Guralnick and Wan refine this in \cite{Guralnick}, proving that for gcd($d,q$) = 1 and $|V_f| < q$, $|V_f| \leq \frac{47}{63}q + O_d(\sqrt{q})$. In addition, Mullen conjectured the bound \begin{eqnarray} \label{SingleWan} |V_f| \leq q - \frac{q-1}{d} \end{eqnarray} for non-permutation polynomials. This was proven by Wan, Shiue and Chen in \cite{WSCsharp} using $p$-adic liftings, but Turnwald later averted the use of liftings with a clever proof in \cite{Turnwald} using elementary symmetric polynomials. This bound was also proven sharp for any finite field by Cusick and M$\ddot{\textrm{u}}$ller (for $f(x) = (x+1)x^{q-1} \in \mathbb{F}_{q^k}[x]$, $|V_f| = q^k - \frac{q^k-1}{q}$ for all integers $k$, see \cite{sharp}). For more sharp examples, see \cite{WSCsharp}.

\vspace{1pc}

Despite the interest mathematicians have taken in the value set problem, most of the work in this area has been dedicated towards univariate polynomials. However, In the past 25 or so years, the multivariate value set problem has been addressed in a few different forms. It was first addressed by Serre in 1988 \cite{SerreGal} over varieties, in connection with Hilbert's irreducibility theorem and the inverse Galois problem. His theorem, alongside results by Fried \cite{FriedHilb} and by Guralnick and Wan \cite{Guralnick} give us upper bounds on our value set which generalize Cohen's result in (\ref{Cohen}). Though these results bound $|V_f|$ by some fraction of $|\mathbb{F}_q^n|$, it is important to note that the error terms in both results, though well behaved with respect to $q$, are exponentially large in terms of the degree $d$ of the map.

\section{Recent Multivariate Value Set Theorems} \label{recent}

A recently published paper by Mullen, Wan, and Wang in 2012 \cite{MWWValue} gives another bound on the value set of polynomial maps, one with no error terms: \begin{theorem} \label{MWWBound} Let $f(x_1,...,x_n)=(f_1(x_1,...,x_n),...,f_n(x_1,...,x_n))$ be a polynomial map over the vector space $\mathbb{F}_q^n$, and let \textnormal{deg} $f$ = \textnormal{max}$_i$ \textnormal{deg} $f_i$. \begin{eqnarray*} \textrm{If }|V_f| < q^n, \textrm{ then }|V_f| \leq q^n - \textnormal{min} \left\{ q, \hspace{.25pc} \frac{n(q-1)}{\textnormal{deg }f}\right\}. \end{eqnarray*} \end{theorem}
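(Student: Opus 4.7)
I would prove the theorem by a case split on a fixed missing value $b \in U_f := \mathbb{F}_q^n \setminus V_f$. The $\min$ structure of the bound naturally suggests a dichotomy: either $|U_f| \geq q$, or $|U_f| \geq n(q-1)/\textnormal{deg } f$. The first half is easy: if some affine line $L = \{b + tv : t \in \mathbb{F}_q\}$ with $v \neq 0$ satisfies $L \cap V_f = \emptyset$, then $L \subseteq U_f$ and $|U_f| \geq |L| = q$, yielding $|V_f| \leq q^n - q$.

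In the complementary case, every line through $b$ meets $V_f$, and I would adapt Turnwald's univariate argument to the multivariate setting. Consider the polynomial $F_c(T) := \prod_{x \in \mathbb{F}_q^n}(T - c \cdot f(x)) \in \mathbb{F}_q[T]$ of degree $q^n$, for an appropriately chosen $c \in \mathbb{F}_q^n$. The power sums $s_j := \sum_{x \in \mathbb{F}_q^n}(c \cdot f(x))^j$ are sums over $\mathbb{F}_q^n$ of polynomials of total degree at most $j \cdot \textnormal{deg } f$; by the standard identity that $\sum_{x \in \mathbb{F}_q^n} x^\alpha$ vanishes in $\mathbb{F}_q$ unless each $\alpha_i$ is a positive multiple of $q-1$, they vanish whenever $j \cdot \textnormal{deg } f < n(q-1)$. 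Newton's identities then force $e_1 = \cdots = e_K = 0$ where $K = \lceil n(q-1)/\textnormal{deg } f \rceil - 1$, and Turnwald's factorization analysis of $F_c$ against $T^q - T$ should yield $|U_{c \cdot f}| \geq K + 1$ whenever the scalar reduction $c \cdot f$ fails to be surjective onto $\mathbb{F}_q$.

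The main obstacle is bridging this scalar bound back to $|U_f|$. In the regime $|U_f| < q$ (the only one relevant to the second term of the minimum), every linear reduction $c \cdot f$ turns out to be surjective onto $\mathbb{F}_q$: otherwise an entire affine hyperplane of size $q^{n-1} \geq q > |U_f|$ would lie in $U_f$, a contradiction. So the Newton argument applied to any single $F_c$ does not suffice on its own. Instead, I expect the argument to exploit the polynomial identity $\prod_{i=1}^n (1 - (f_i(x) - b_i)^{q-1}) \equiv 0$ on $\mathbb{F}_q^n$ (equivalent to $b \notin V_f$), together with a careful count on the lines through $b$, applying the univariate Wan--Shiue--Chen bound (\ref{SingleWan}) fiber-wise to one-parameter slices of the varieties $X_i := \{x : f_j(x) = b_j \text{ for all } j \neq i\}$. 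The fact that $n$ appears in the bound suggests the $n$ coordinate directions at $b$ as the right slicing, each contributing roughly $(q-1)/\textnormal{deg } f$ distinct missed values to $U_f$ and summing (after inclusion-exclusion at the shared point $b$) to the required $n(q-1)/\textnormal{deg } f$. Handling the characteristic-$p$ cases in which Newton's identities degenerate will likely require a direct manipulation of the above polynomial identity rather than symmetric-function machinery.
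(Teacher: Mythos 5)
Your proposal has a genuine gap, and it sits exactly where you locate it: the passage from a scalar reduction $c \cdot f : \mathbb{F}_q^n \to \mathbb{F}_q$ back to the vector-valued value set. As you observe, when $|U_f| < q$ every such reduction is surjective, so the Turnwald/Newton machinery applied to $F_c$ yields nothing; and the fallback you sketch --- applying the univariate bound (\ref{SingleWan}) ``fiber-wise'' to $f_i$ restricted to the varieties $X_i = \{x : f_j(x) = b_j,\ j \neq i\}$ --- is not available, because Theorem \ref{SingleVar} is a statement about polynomials on $\mathbb{F}_q$ itself, not about restrictions of $f_i$ to arbitrary subvarieties (which need not be rational one-parameter families on which $f_i$ has controlled degree). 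Even granting such a fiber-wise bound, the union of the $n$ coordinate lines through $b$, each missing $m$ values of which $b$ is one, only yields $n(m-1)+1$ missed values, short of the needed $nm$. So the second branch of your dichotomy is not established, and your own closing caveat about Newton's identities degenerating in characteristic $p$ is a further unresolved issue in the branch you do attempt.

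The route the paper follows (after Mullen--Wan--Wang; see Sections \ref{Single}--\ref{Multi}) resolves the bridging problem differently: fix a basis $e_1, \dots, e_n$ of $\mathbb{F}_{q^n}$ over $\mathbb{F}_q$ and set $g = f_1 e_1 + \cdots + f_n e_n$, viewed as a univariate map from $\mathbb{F}_{q^n}$ to itself. This loses nothing: $|V_f| = |V_g|$ exactly, so no scalar projection is ever taken. One then applies Lemma \ref{Ulemma}, giving $|V_f| \leq q^n - U(g)$, and bounds $U(g)$ from below by showing $S_k(g) = \sum_{x \in L_{q^n}} \tilde{g}(x)^k \equiv 0 \pmod{pk}$ for all $k < \min\{q,\ n(q-1)/\deg f\}$, expanding $g(x_1,\dots,x_n)^k$ into monomials and invoking (\ref{charsum}). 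Working with Teichm\"{u}ller lifts modulo $pk$ in characteristic zero is precisely what replaces the symmetric-function step and sidesteps the characteristic-$p$ degeneration; the constraint $k < q$ --- the true source of the $\min$ with $q$, quite unlike your line-based dichotomy --- is what guarantees $1 + v_p(k) \leq v_p(q)$ in the final divisibility estimate. If you wish to avoid $p$-adic lifts entirely, Kosters' proof of Theorem \ref{Kostersdegree} shows how to run a Turnwald-style argument, but again on $g$ over the big field $\mathbb{F}_{q^n}$ rather than on scalar reductions to $\mathbb{F}_q$.
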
Since the time their paper was published, multiple refinements have been made to this theorem.

\vspace{1pc}

One approach towards improving Theorem \ref{MWWBound} is to replace the term $\frac{n(q-1)}{\textnormal{deg }f}$ by using different properties of the polynomial map $f$. Note that the degree only takes one monomial of $f$ into account, so it is reasonable to expect tighter bounds on $|V_f|$ if we account for every monomial. In my first paper \cite{Mypaper}, I improved upon theorem \ref{MWWBound} by generalizing Mullen, Wan, and Wang's $p$-adic lifting approach and utilizing the Newton polytope $\Delta(f)$ of the  polynomial map $f$. The Newton polytope is constructed using all monomials of $f$ using discrete geometry, meaning it encodes more information than deg $f$ and allows for a stronger statement to be made: 

\begin{theorem}[Smith \cite{Mypaper}, 2014] \label{mygaptheorem} Let $f(x_1,...,x_n)=(f_1(x_1,...,x_n),...,f_n(x_1,...,x_n))$ be a polynomial map over the vector space $\mathbb{F}_q^n$, let $\Delta(f)$ be the Newton polytope of $f$, and let $\mu_f$ be a certain constant (defined explicitly later) dependent on $\Delta(f)$. \begin{eqnarray*} \textrm{If } |V_f| < q^n, \textrm{ then }|V_f| \leq q^n - \textnormal{min}\{q,\hspace{.25pc} \mu_f\cdot(q-1)\},\end{eqnarray*} \end{theorem}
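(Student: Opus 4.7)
The plan is to adapt the $p$-adic lifting framework of Mullen, Wan, and Wang to the setting of the Newton polytope, replacing the coarse total-degree simplex $\{x : x_i \geq 0,\ \sum x_i \leq \deg f\}$ with the actual polytope $\Delta(f)$. First I would lift the coefficients of $f$ to their Teichm\"uller representatives in the Witt ring $W(\mathbb{F}_q)$, so that counts of solutions over $\mathbb{F}_q^n$ can be expressed in characteristic zero and read off modulo $p$. Using the indicator identity $\mathbf{1}_{y=a} = 1 - (y-a)^{q-1}$ componentwise, the shortfall $q^n - |V_f|$ becomes an alternating sum of moments
\[
M_k \;=\; \sum_{x \in \mathbb{F}_q^n} \prod_{i=1}^n f_i(x)^{k_i},
\]
weighted by monomials in the coordinates of a missing point $a \in \mathbb{F}_q^n \setminus V_f$.

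The technical heart of the argument is a vanishing lemma stating that $M_k \equiv 0 \pmod{p}$ whenever the Minkowski sum $\sum_i k_i \Delta(f_i) \subseteq |k| \cdot \Delta(f)$, with $|k| = \sum_i k_i$, contains no lattice point all of whose coordinates are positive multiples of $q-1$. Monomial by monomial this follows from the classical identity that $\sum_{x \in \mathbb{F}_q} x^b$ equals $-1$ when $b$ is a positive multiple of $q-1$ and $0$ otherwise. Since $\mu_f$ is designed so that $\mu_f \cdot \Delta(f)$ is the smallest scaling of $\Delta(f)$ containing the diagonal point $(1,\ldots,1)$, the dilate $|k|\cdot\Delta(f)$ cannot reach the corner $(q-1,\ldots,q-1)$ until $|k| \geq \mu_f(q-1)$. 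Consequently every moment $M_k$ with $|k| < \mu_f(q-1)$ vanishes modulo $p$.

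To convert this vanishing into the stated bound I would follow the elementary symmetric function approach of Wan--Shiue--Chen and Turnwald, now in its multivariate incarnation. The collective vanishing of $M_k$ for $|k|$ in the forbidden range pins down, via Newton's identities, the leading elementary symmetric polynomials of the multiset $\{f(x)\}_{x \in \mathbb{F}_q^n}$. Assuming $V_f \subsetneq \mathbb{F}_q^n$, this forces at least $\min\{q,\ \mu_f(q-1)\}$ distinct points of $\mathbb{F}_q^n$ to lie outside $V_f$; the $\min$ with $q$ reflects the same two-regime phenomenon already present in Theorem \ref{MWWBound}, corresponding to when the symmetric-function argument saturates at a single fiber's worth of missing values.

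The main obstacle I anticipate is handling the Minkowski sum $\sum_i k_i \Delta(f_i)$ rather than the easier dilate $|k|\cdot\Delta(f)$: to make $\mu_f$ sharp one must verify that the lattice-point obstruction cannot be dodged by a favourable interior arrangement of the individual polytopes $\Delta(f_i)$ inside $\Delta(f)$. A secondary subtlety lies in the Newton-identities step, since the symmetric polynomial argument of Wan--Shiue--Chen and of Turnwald is inherently scalar, so that to get a bound on the image in $\mathbb{F}_q^n$ one must either specialise via a well-chosen linear functional on the target or develop a genuinely multivariate variant of the symmetric polynomial machinery.
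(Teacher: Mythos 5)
Your geometric core is sound and matches the paper's: each monomial of a $k$-fold product of the $f_i$'s has exponent vector in $|k|\,\Delta(f)$, the character sum $\sum_{x\in\mathbb{F}_q}x^b$ kills every monomial unless all exponents are positive multiples of $q-1$, and the definition of $\mu_f$ guarantees $|k|\,\Delta(f)\cap(q-1)\mathbb{N}^n=\varnothing$ for $|k|<\mu_f(q-1)$. (Two small corrections there: $\mu_f$ is the least dilation meeting \emph{any} point of $\mathbb{N}^n$, not the diagonal point $(1,\ldots,1)$, and the obstruction is to reaching any point of $(q-1)\mathbb{N}^n$, not just the corner $(q-1,\ldots,q-1)$; with the correct definition the Minkowski-sum worry you raise disappears, since $\sum_i k_i\Delta(f_i)\subseteq |k|\Delta(f)$ is all that is used.)

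The genuine gap is the bridge from moment vanishing to the cardinality bound, which you leave unassembled. First, the indicator identity $\mathbf{1}_{y=a}=1-(y-a)^{q-1}$ does not express the shortfall $q^n-|V_f|$ as an alternating sum of moments; per missing point $a$ it gives one congruence involving the fiber count $N(a)$, and no finite collection of such congruences mod $p$ counts missing points. Second, vanishing of $M_k$ modulo $p$ is too weak: the Wan--Shiue--Chen mechanism requires $S_k\equiv 0\pmod{pk}$ in the Teichm\"uller lift, and the Turnwald/Kosters route requires running Newton's identities carefully in characteristic $p$; you invoke both but commit to neither. Third, the scalar-versus-vector issue you flag at the end is exactly the crux, and a linear functional to $\mathbb{F}_q$ will not work because it collapses distinct values. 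The paper's resolution is to fix a basis $e_1,\ldots,e_n$ of $\mathbb{F}_{q^n}$ over $\mathbb{F}_q$, set $g=f_1e_1+\cdots+f_ne_n$ viewed as a univariate map of $\mathbb{F}_{q^n}$ with $|V_f|=|V_g|$, apply the univariate lemma $|V_g|\le q^n-U(g)$, and then show $U(g)\ge\min\{q,\mu_f(q-1)\}$ by your monomial computation; the divisibility it extracts is only $q^{\,n-\ell_j}$, and making that exceed $pk$ in the worst case $\ell_j=n-1$ is precisely what forces $k<q$ and hence the $\min$ with $q$. Your heuristic that the $\min$ reflects ``saturation at a single fiber's worth of missing values'' is therefore not the actual source of that term, and a correct write-up along your lines would need to either adopt the mod-$pk$ lemma (inheriting the $\min$) or carry out the full Kosters-style symmetric-function argument (which in fact removes the $\min$, proving more than the stated theorem).
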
 

Zan and Cao also refine Thoerem 0.1 by using the degree matrix $D_f$ of the polynomial map $f$ in order to account for all of the monomials of $f$. Their approach generalizes the $p$-adic lifting technique as well and improves upon my statement in \cite{Mypaper}: \begin{theorem}[Zan, Cao \cite{ZanCao}, 2014] \label{omegagap} Let $f(x_1,...,x_n)=(f_1(x_1,...,x_n),...,f_n(x_1,...,x_n))$ be a polynomial map over the vector space $\mathbb{F}_q^n$ and let $D_f$ be the degree matrix of $f$. \begin{eqnarray*} \textrm{If } |V_f| < q^n, \textrm{ then }|V_f| \leq q^n - \textnormal{min}\{q,\hspace{.25pc} \omega_f\},\end{eqnarray*} where the constant $\omega_f$ (defined explicitly later) depends on $D_f$. \end{theorem}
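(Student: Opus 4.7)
The plan is to adapt the $p$-adic lifting technique used by Mullen–Wan–Wang in their proof of Theorem \ref{MWWBound} and by Smith in the proof of Theorem \ref{mygaptheorem}, refining the monomial bookkeeping so that the information encoded in the full degree matrix $D_f$ is exploited rather than just the total degree or the convex hull of exponents.

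First, I would reduce to the nontrivial case: set $N \defeq |V_f|$, suppose $N < q^n$, let $M \defeq q^n - N$, and assume $M < q$ (otherwise the bound is immediate). Enumerate the missing values $\mathbf{c}_1, \ldots, \mathbf{c}_M \in \mathbb{F}_q^n \setminus V_f$. The goal is then to prove $M \geq \omega_f$. To connect this to the polynomials $f_i$, I would invoke the standard Fermat identity over $\mathbb{F}_q$: for each $\mathbf{c} \in \mathbb{F}_q^n$,
\begin{equation*}
\sum_{\mathbf{x} \in \mathbb{F}_q^n} \prod_{i=1}^{n} \bigl(1 - (f_i(\mathbf{x}) - c_i)^{q-1}\bigr) \;=\; \#\{\mathbf{x} \in \mathbb{F}_q^n : f(\mathbf{x}) = \mathbf{c}\},
\end{equation*}
so the left side vanishes whenever $\mathbf{c} = \mathbf{c}_j$ for some $j$. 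Expanding over subsets $I \subseteq \{1,\ldots,n\}$ and using the basic vanishing $\sum_{\mathbf{x} \in \mathbb{F}_q^n} \mathbf{x}^{\mathbf{a}} = 0$ unless every coordinate of $\mathbf{a}$ is a positive multiple of $q-1$, only a restricted set of monomials from $\prod_{i \in I}(f_i(\mathbf{x}) - c_i)^{q-1}$ can contribute.

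The central combinatorial step is then to track, for each $I$, precisely which exponent vectors arising in that product can be componentwise divisible by $q-1$. The rows of the degree matrix $D_f$ impose linear constraints on the achievable exponents, and the minimum $I$-weighted count consistent with those constraints is exactly the quantity packaged into $\omega_f$. To push past the trivial $M \geq 1$ bound and obtain the full $M \geq \omega_f$, I would lift the entire computation to the Witt ring $\mathbb{Z}_q$ via Teichmüller representatives (following Wan) and then track the $p$-adic valuation of the resulting congruences across all $M$ missing points simultaneously; this upgrades the mod-$p$ vanishing to an inequality on $M$ in terms of the degree-matrix invariant.

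The hardest step will be verifying that the combinatorial quantity $\omega_f$ as defined from $D_f$ matches exactly what falls out of the refined monomial analysis, and that this quantity dominates $\mu_f$ from Theorem \ref{mygaptheorem}. Concretely, one must argue that passing from the Newton polytope to the full degree matrix gives strictly finer linear constraints on the surviving exponent vectors, and then show that the optimal $\omega_f$ derived from those constraints is indeed achieved (or exceeded) by the $p$-adic/Chevalley–Warning inequality. This compatibility check — reconciling the polytope-based machinery of the earlier proof with the matrix-based refinement — is where the essential new work of Zan and Cao lies, and it is the step most likely to require delicate bookkeeping rather than a conceptual shortcut.
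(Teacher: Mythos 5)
There is a genuine gap at the heart of your proposal: the mechanism that converts vanishing character sums into the inequality $M \geq \omega_f$. The Fermat identity $\sum_{\mathbf{x}} \prod_{i}\bigl(1-(f_i(\mathbf{x})-c_i)^{q-1}\bigr) = \#f^{-1}(\mathbf{c})$ gives, for each individual missing value $\mathbf{c}_j$, an equation whose right-hand side is exactly zero; there is no residual quantity whose $p$-adic valuation can be ``tracked across all $M$ missing points simultaneously,'' and no aggregation over $j$ is described that would make the count $M$ appear on one side of an inequality. The step you gesture at is precisely the nontrivial content of the argument, and the route actually used (by Zan and Cao in \cite{ZanCao}, and by this paper for the analogous Theorems \ref{mubound} and \ref{omegabound}) is different: one first passes to the univariate companion $g(x) = f_1 e_1 + \cdots + f_n e_n$ on $\mathbb{F}_{q^n}$ as in Section \ref{From2}, and then invokes Lemma \ref{Ulemma}, the Wan--Shiue--Chen power-sum lemma \cite{WSCsharp} giving $|V_f| \leq q^n - U(g)$, where $U(g)$ is the least $k$ with $S_k(g) = \sum_{x \in L_{q^n}} \tilde{g}(x)^k \not\equiv 0 \pmod{pk}$. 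That lemma --- proved via symmetric functions and Newton's identities on the multiset of values, not via the indicator identity --- is what makes the number of missing values enter the estimate. Once it is in hand, the theorem reduces to showing $S_k(g) \equiv 0 \pmod{pk}$ for all $k < \min\{q, \omega_f\}$, and this is where the expansion of $g(x_1,\ldots,x_n)^k$ into monomials, the vanishing of $\sum_{y \in L_q} y^{w}$ unless $(q-1) \mid w$ and $w \neq 0$, and the degree-matrix condition $\sum_j k_j D_j \in (q-1)\mathbb{N}^n$ defining $\omega_f$ all enter (compare the proof of Lemma \ref{Uglemma}).

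Two smaller points. First, your restriction to exponent vectors componentwise divisible by $q-1$, and your reading of $\omega_f$ as the minimal number of exponent vectors (each taken with multiplicity at most $q-1$) whose sum lands in $(q-1)\mathbb{N}^n$, are the correct combinatorial ingredients; they are exactly what is used once the power-sum framework is in place, so your monomial bookkeeping is salvageable in that setting. Second, the comparison $\omega_f \geq \mu_f(q-1)$ that you single out as the hardest step is not needed for this theorem at all --- it is a separate lemma of Zan and Cao used only to rank the various bounds against one another.
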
 

\vspace{1pc}

Overall, each new refinement gives us stronger bounds, i.e. $\omega_f \geq \mu_f \cdot (q-1) \geq \frac{n}{\textnormal{deg} f}(q-1)$ (see \cite{Adolph} and \cite{ZanCao}). In addition, in the univariate case, it has been shown that there are instances when $\omega_f$ is strictly larger than $\frac{q-1}{\textnormal{deg} f}$ (as opposed to $\mu_f$ always being equal to $\frac{1}{\textnormal{deg} f}$ when $n = 1$). However, since each of these bounds are of the form $|V_f| \leq q^n - \textnormal{min}\{C_f,q\}$ with $C_f$ dependent on the theorem, we are limited to removing at most $q$ elements from these cardinality bounds.

\vspace{1pc}

Another type of improvement on theorem \ref{MWWBound} removes this dependence on subtracting the minimum of two constants. Though still dependent on the polynomial map degree, a theorem by Kosters allows for a stronger bound whenever $n >$ deg $f$: \begin{theorem}[Kosters \cite{Kosters}, 2014]   Let $f(x_1,...,x_n)=(f_1(x_1,...,x_n),...,f_n(x_1,...,x_n))$ be a polynomial map over the vector space $\mathbb{F}_q^n$, and let \textnormal{deg} $f$ = \textnormal{max}$_i$ \textnormal{deg} $f_i$. \begin{eqnarray*} \textrm{If }|V_f| < q^n, \textrm{ then }|V_f| \leq q^n - \frac{n(q-1)}{\textnormal{deg } f}. \end{eqnarray*} \end{theorem}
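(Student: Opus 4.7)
The plan is to extend the $p$-adic lifting framework that underlies Theorem \ref{MWWBound} and its refinements, while sharpening the valuation bookkeeping enough to remove the coarse $\min(q,\cdot)$ cap. Write $M = \mathbb{F}_q^n \setminus V_f$, $m = |M| \geq 1$, and $d = \deg f$; the goal is to show $m \cdot d \geq n(q-1)$.

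First I would set up the standard indicator identity. For each $b \in M$, the polynomial $\varphi_b(x) = 1 - \prod_{i=1}^n(1 - (f_i(x) - b_i)^{q-1})$ evaluates to $1$ at every $x \in \mathbb{F}_q^n$, because $f(x) \ne b$ forces at least one Fermat factor to vanish, so $\Phi(x) \defeq \prod_{b \in M}\varphi_b(x) \equiv 1$ on $\mathbb{F}_q^n$. I would then lift each $f_i$ and each $b \in M$ to Teichm\"uller representatives $\hat f_i, \hat b$ in the Witt ring $W(\mathbb{F}_q)$, form the lifted polynomial $\hat\Phi$, and evaluate the Teichm\"uller sum $\sum_{\hat x \in T^n}\hat\Phi(\hat x)$. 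Expanding $\hat\Phi$ into monomials $\hat x^{\alpha}$ and invoking the standard fact that $\sum_{\hat x \in T^n}\hat x^{\alpha}$ vanishes unless every coordinate of $\alpha$ is a nonnegative multiple of $q-1$ converts the identity $\Phi \equiv 1$ into a $p$-adic constraint among the coefficients of $\hat\Phi$ indexed by $\alpha \in (q-1)\mathbb{Z}_{\geq 0}^n$.

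The main obstacle is extracting the clean bound $m \cdot d \geq n(q-1)$ rather than the weaker $\min(q, m\cdot d) \geq n(q-1)$. Expanding
\[
\hat\Phi = \sum_{S \subseteq M}(-1)^{|S|}\prod_{b \in S}\prod_{i=1}^n \bigl(1 - (\hat f_i - \hat b_i)^{q-1}\bigr),
\]
each nonzero contribution to the Teichm\"uller sum comes from a subset $S \subseteq M$ and a choice of Fermat factors $(\hat f_i - \hat b_i)^{q-1}$ whose collective expansion contains a monomial with coordinatewise multi-exponent in $(q-1)\mathbb{Z}_{\geq 0}^n$. A naive per-coordinate analysis (essentially Wan--Shiue--Chen applied to each $x_i$-slice) reproduces the univariate bound and gives only the $\min(q,\cdot)$ cap present in Theorems \ref{MWWBound}--\ref{omegagap}. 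The improvement Kosters obtains must come from treating all $n$ coordinates simultaneously, for instance by tracking the Newton polytope of the product $\prod_{i=1}^n(\hat f_i - \hat b_i)^{q-1}$ and showing that any surviving monomial's multi-exponent forces the total $\hat f$-degree across the factors in $S$ to be at least $n(q-1)$. Since each $\hat f_i$ has $x$-degree at most $d$, this collective degree bound forces $|S| \cdot d \geq n(q-1)$ for every $S$ contributing nontrivially, and hence $m \geq |S| \geq n(q-1)/d$. Making this joint degree-and-divisibility estimate tight --- without reverting to a factor-by-factor argument that would lose the crucial $q-1$ improvement --- is the hard step.
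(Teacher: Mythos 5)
Your proposal is a plan rather than a proof, and the one step that carries all the content is explicitly left open. The setup (the indicator polynomial $\varphi_b(x) = 1 - \prod_{i}(1-(f_i(x)-b_i)^{q-1})$, the product $\Phi = \prod_{b\in M}\varphi_b \equiv 1$, Teichm\"uller lifting, and the vanishing of $\sum_{\hat x} \hat x^{\alpha}$ unless $(q-1)\mid \alpha_i$ for all $i$) is a legitimate starting point, but you then write that ``the improvement Kosters obtains \emph{must} come from treating all $n$ coordinates simultaneously, for instance by tracking the Newton polytope \dots'' and conclude that making this estimate tight ``is the hard step.'' That hard step is the theorem. Worse, the degree heuristic you sketch for it does not cohere: a \emph{single} factor $\varphi_b$ already has $x$-degree up to $nd(q-1)$, since it contains $n$ Fermat factors each of degree $d(q-1)$, so the requirement that a surviving monomial have total exponent at least $n(q-1)$ is met with $|S|=1$ and yields only $m \geq 1/d$, not $m \geq n(q-1)/d$. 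The inequality $|S|\cdot d \geq n(q-1)$ does not follow from the degree count you describe, and no mechanism is given that would produce it.

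For contrast, the paper's route is entirely different and is complete. It works with the power sums $S_k(f)=\sum_{x\in L_{q^n}}\tilde f(x)^k$ and the quantity $U(f)$, so that Lemma \ref{Ulemma} reduces the theorem to showing $S_k(f)\equiv 0 \pmod{pk}$ for all $k< n(q-1)/d$. The new ingredient that removes the $\min\{q,\cdot\}$ cap is Lemma \ref{ppowered}: writing $k=p^ek_1$ with $p\nmid k_1$, the expansion $(f_1e_1+\cdots+f_ne_n)^{p^e}=F_0(x_1^{p^e},\dots)+pF_1(x_1^{p^{e-1}},\dots)+\cdots+p^eF_e(x_1,\dots,x_n)$ trades degree for explicit powers of $p$ (via Singmaster's divisibility of multinomial coefficients). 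After raising to the $k_1$ and substituting $y_i=x_i^{p^{e-\lambda}}$, each surviving monomial is divisible by $p^{\lambda}q^{n-\ell_j}$ with $\ell_j\leq\lfloor dp^{\lambda}k_1/(q-1)\rfloor$, and a short case analysis on $r=e-\lambda$ verifies $\lambda+(n-\lfloor dk/(p^r(q-1))\rfloor)v_p(q)\geq e+1$, exactly in the regime $n>d$, $k\geq q$ where the older arguments failed. If you want to salvage your approach, you would need to either supply the missing joint divisibility argument for $\hat\Phi$ or switch to the power-sum framework, where the paper shows precisely how the extra $p$-adic valuation is extracted.
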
 In order to achieve this result, Kosters completely averted the use of $p$-adic liftings, instead using a method more akin to Turnwald's univariate proof in \cite{Turnwald}.

\section{Main Result}

In this paper, I will refine these multivariate value set bounds even further, removing the minimum condition from theorems \ref{mygaptheorem} and \ref{omegagap}, ultimately proving the following theorem: \begin{theorem} Let $f(x_1,...,x_n)=(f_1(x_1,...,x_n),...,f_n(x_1,...,x_n))$ be a polynomial map over the vector space $\mathbb{F}_q^n$ and let $D_f$ be the degree matrix of $f$. \begin{eqnarray*} \textrm{If } |V_f| < q^n, \textrm{ then }|V_f| \leq q^n - \omega_f,\end{eqnarray*} where the constant $\omega_f$ depends on $D_f$. \end{theorem}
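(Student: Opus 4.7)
The plan is to adapt Kosters' Turnwald-style method to the degree-matrix setting of Theorem~\ref{omegagap}, entirely avoiding $p$-adic liftings. The minimum condition $\min\{q,\omega_f\}$ arose in the Zan--Cao proof because lifting each residue class to the Witt ring forces a natural cap of $q$ on any bound obtained this way; a direct argument over $\mathbb{F}_q$, of the kind Turnwald used in \cite{Turnwald} and Kosters extended in \cite{Kosters}, does not pay this price.

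First, assume $V_f \subsetneq \mathbb{F}_q^n$ and set $N \defeq q^n - |V_f| > 0$. I would organize the argument around the substitution homomorphism
\[
\phi \colon \mathbb{F}_q[y_1,\ldots,y_n]/(y_j^q-y_j) \longrightarrow \mathbb{F}_q[x_1,\ldots,x_n]/(x_i^q-x_i), \qquad P(y) \mapsto P(f_1(x),\ldots,f_n(x)).
\]
The kernel of $\phi$ consists exactly of the polynomial functions that vanish on $V_f$, so $\dim_{\mathbb{F}_q} \ker \phi = N$. The theorem is equivalent to the inequality $N \geq \omega_f$, which I would prove contrapositively: assuming $N < \omega_f$, I would exhibit a nonzero $P \in \ker \phi$ and derive a contradiction.

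The heart of the proof is to pair each monomial $y^\alpha$ with the expansion of $f_1^{\alpha_1}\cdots f_n^{\alpha_n}$ read through the degree matrix $D_f$. The quantity $\omega_f$ from Zan--Cao is defined by a combinatorial optimization on $D_f$ that is designed precisely to control the support of these expansions; I would isolate a set $A \subseteq \mathbb{Z}_{\geq 0}^n$ of size $|A| = \omega_f$ with the property that every $x$-monomial appearing in $f^\alpha$ for $\alpha \in A$ lies in the hypercube $\{0 \leq e_i < q\}$. For any $P$ supported in $A$, the polynomial $P(f(x))$ is therefore already reduced modulo $(x_i^q - x_i)$, so $\phi(P) = 0$ forces $P(f(x)) = 0$ as an honest polynomial identity in $\mathbb{F}_q[x]$. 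A Turnwald-style finish then selects a distinguished leading $x$-monomial of $P(f(x))$, whose coefficient is a nonzero product of the leading coefficient of $P$ and leading coefficients of powers of the $f_i$; this forces $P = 0$, contradicting the pigeonhole count $\dim \ker \phi = N < \omega_f = |A|$.

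The main obstacle is the construction of the support region $A$ with $|A| = \omega_f$ together with a monomial order making the leading-term selection well-defined. In the univariate Turnwald argument one simply takes $A = \{0,1,\ldots,s-1\}$ under the degree order; in the multivariate degree-matrix setting, $A$ must be carved out of $\mathbb{Z}_{\geq 0}^n$ by the same linear program that defines $\omega_f$ from $D_f$, and one must then verify that a monomial order can be chosen so that the leading $x$-monomial of $f^\alpha$ is distinct for distinct $\alpha \in A$. This ``distinct leading terms'' property is what replaces the injectivity of the Teichm\"uller lift used in the $p$-adic approach of \cite{ZanCao}, and working it out in purely characteristic $p$ terms, directly from $D_f$, is where the bulk of the technical effort will lie.
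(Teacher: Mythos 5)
Your proposal takes a genuinely different route from the paper, which does not abandon $p$-adic liftings but refines them: the paper expands $(f_1e_1+\cdots+f_ne_n)^{p^e}$ via Lemma \ref{ppowered} to extract powers of $p$ \emph{before} splitting $S_k(f)$ into monomials, obtains the congruence modulo $p^{\lambda}q^{n-\ell_j}$ for the sums \eqref{Monomproduct}, and then uses Lemma \ref{gammabound} to remove the cap of $q$ while still concluding via Lemma \ref{Ulemma}. However, your argument as written contains a genuine gap: the dimension count points the wrong way. Granting the construction of $A$ and the distinct-leading-term property, what you establish is that the $\omega_f$-dimensional subspace $W_A=\mathrm{span}\{y^{\alpha}:\alpha\in A\}$ meets $\ker\phi$ trivially. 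That gives $\dim W_A+\dim\ker\phi\le q^n$, i.e.\ $N\le q^n-\omega_f$, i.e.\ $|V_f|\ge\omega_f$ --- a \emph{lower} bound on the value set, not the desired upper bound $|V_f|\le q^n-\omega_f$ (which is the statement $N\ge\omega_f$). There is no contradiction between ``$W_A\cap\ker\phi=0$'' and ``$\dim\ker\phi=N<\omega_f$''; a pigeonhole forcing a nonzero $P\in W_A\cap\ker\phi$ would require $N+\omega_f>q^n$, which is not your hypothesis. To prove the theorem through $\phi$ you would instead have to exhibit $\omega_f$ linearly independent elements \emph{of} $\ker\phi$, equivalently $\omega_f$ independent linear functionals annihilating $\mathrm{im}\,\phi$. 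The natural candidates are the power sums $P\mapsto\sum_{x}P(f(x))$, and in characteristic $p$ these detect information only modulo $p$ --- which is exactly why Wan--Shiue--Chen lift to $\mathbb{Z}_q$ and work with $S_k(f)\bmod pk$, and why Turnwald and Kosters replace power sums by elementary symmetric functions of the multiset $\{f(x)\}$ rather than by the kernel of a substitution map.

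A secondary problem is the set $A$ itself. The quantity $\omega_f$ is a minimum of sums $\sum_j k_j$ of multiplicities attached to the \emph{columns} $D_j$ of the degree matrix, subject to $\sum_j k_jD_j\in(q-1)\mathbb{N}^n$; it is not the cardinality of any natural set of exponent vectors $\alpha$ for the target variables $y$, so there is no reason a region $A\subseteq\mathbb{Z}_{\ge0}^n$ with $|A|=\omega_f$ and the ``already reduced, distinct leading terms'' property should exist, and you have not indicated how the defining optimization for $\omega_f$ would produce one. If you want a characteristic-$p$ proof in the spirit of Kosters, the workable path is his: show that the elementary symmetric functions $e_i$ of the $q^n$ values $f(x)$ vanish for $1\le i<\omega_f$ and then apply Turnwald's lemma relating the first nonvanishing $e_i$ to $q^n-|V_f|$; the combinatorial content of $\omega_f$ would then enter through the vanishing of $e_i$, not through a kernel computation.
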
 To properly convey the significance of this bound in relation to prior bounds, we will describe the Newton polytope in Section \ref{NewPoly} and the degree matrix in Section \ref{Degree}. We will also define the constants associated with these objects and connections between the two. This manuscript will also contain portions of my work in \cite{Mypaper} which are relevant to the proofs of theorems in Chapter \ref{refine} as well as a proof of the main theorem in \cite{Mypaper} to highlight the difference in techniques used.

\section{The Newton Polytope}
\label{NewPoly}

Let $F$ be an arbitrary field and let $h \in F[x_1,...,x_n]$. If we write $h$ in the form \begin{eqnarray} \label{sparse} h(x_1,...,x_n) = \sum_{j=1}^m a_jX^{D_j}, \hspace{1 pc} a_j \in F^{*} \end{eqnarray} where \begin{eqnarray} \label{polynota} D_j = (d_{1j},...,d_{nj}) \in \mathbb{Z}_{\geq 0}^n, \hspace{1 pc} X^{D_j} = x_1^{d_{1j}}...x_n^{d_{nj}}, \end{eqnarray} then we have the following definition:

\begin{definition}[Newton polytope] The Newton polytope of polynomial $h \in F[x_1,...,x_n]$, $\Delta(h)$, is the convex closure of the set $\{D_1,...,D_m\} \cup \{(0,...,0)\}$ in $\mathbb{R}^n$.
\end{definition}

Geometric properties of the Newton polytope, such as its dilation by $k \in \mathbb{R}$, its volume or its decomposition into other polytopes via Minkowski Sum, are useful tools in discerning properties of their associated polynomials. For more information, see \cite{WeiCao}, \cite{zeta}, and \cite{variation}.

\vspace{1pc}

The significance of the Newton polytope to the multivariate value set problem comes from the definition of the following quantity:

\begin{definition}[Minimal dilation factor $\mu_h$] Let $F$ be a field, let $h \in F[x_1,...,x_n]$, and let $\Delta(h)$ be the Newton polytope of $h$. \begin{eqnarray*} \mu_h \defeq \textnormal{inf}\{ k \in \mathbb{R}_{>0} \mid k\Delta(h) \cap \mathbb{N}^n \neq \varnothing \}. \end{eqnarray*}
\end{definition}

In other words, $\mu_h$ is the infimum of all positive real numbers $k$ such that the dilation of $\Delta(h)$ by $k$ contains a lattice point with strictly positive coordinates, and we define $\mu_h = \infty$ if such a dilation does not exist. For our purposes, since the vertices of our polytopes have integer coordinates, $\mu_h$ will always be finite and rational so long as we consider $h$ which is not a polynomial in some proper subset of $\{x_1,..., x_n\}$. If $h$ is polynomial in a proper subset of $\{x_1,..., x_n\}$, then we may make a linear change of variables $\{z_1,...,z_{\nu}\}$, $\nu < n$, which allows us to consider $\Delta(h(z_1,...,z_{\nu})) \subset \mathbb{R}^{\nu}$, where $\mu_h$ will be finite.

%let dim $\Delta(h) = \delta$. Then $\delta < n$

\vspace{1pc}

The quantity $\mu_f$ is used by Adolphson and Sperber \cite{Adolph} to put a lower bound on the $q$-adic valuation ord$_q$ of the number of $\mathbb{F}_q$-rational points on a variety $V$, $N(V)$, over $\mathbb{F}_q$. Namely, let $V = Z(f_1,...,f_m)$ be the vanishing set of $f_1,...,f_n$, where $f_i \in \mathbb{F}_q[x_1,...,x_n]$. If the collection of polynomials $f_1,..., f_m$ is not polynomial in some proper subset of $x_1,..., x_n$, then we have for $f(x_1,..., x_n,y_1,...,y_m) = f_1(x_1,...,x_n)y_1 + \cdots + f_m(x_1,...,x_n)y_m$, \begin{eqnarray*} \textrm{ord}_q (N(V)) \geq  \mu_f - m. \end{eqnarray*}

Note that in the above definitions, the multivariate polynomial $h$ maps the vector space $F^n$ into its base field $F$. However, for the value set problem, we are interested in studying the polynomial vector $f:\mathbb{F}_q^n \longrightarrow \mathbb{F}_q^n$. Fortunately, the definitions we have developed in this section can be extended to polynomial vectors. If we denote the support of $h$ by $\Gamma(h) \defeq \{D_1,...,D_m\}$, then we define $\Delta(f)$ to be the convex closure of $\Gamma(f_1) \cup \cdots \cup \Gamma(f_n) \cup \{(0,...,0)\}$ in $\mathbb{R}^n$.

\section{The Degree Matrix and Comparison of Constants} \label{Degree}

For our multivariate polynomial $h$ as in Section \ref{NewPoly}, we define the $n \times m$ degree matrix of $h$, $D_h \defeq (D_1,...,D_m) \in \mathbb{Z}_{\geq 0}^{n \times m}$. The degree matrix has been used by Cao and his collaborators in \cite{Cao20091778}, \cite{CaoSun}, and \cite{ChenCao} in rational point counting and $p$-adic estimates. In relation to the value set problem, Zan and Cao use the degree matrix in \cite{ZanCao} as a succinct way of keeping track of the exponent vectors $D_j$ that does not explicitly rely on a geometry. Using this, they define the following invariant of $h$. \begin{definition}[Integral dilation factor $\omega_h$] Let $F$ be a field, let $h \in F[x_1,...,x_n]$ be as in equation \eqref{sparse}.  \begin{equation} \omega_h \defeq \textnormal{min}\left\{ \sum_{j=1}^m k_j \left| k_j \in \{0,1,...,q-1\}, \sum_{j=1}^m k_jD_j \in (q-1)\mathbb{N}^n \right. \right\}. \end{equation} \end{definition} This constant can be thought of as the minimal number of exponent vectors (up to $q-1$ duplicates of each) needed to be summed together to reach a lattice point where all coordinates are positive multiples of $q-1$. Again, so long as $h$ is not polynomial in some proper subset of $\{x_1,...,x_n \}$, $\omega_h$ will always exist.

\vspace{1pc}

Though $\omega_f$ and $\mu_{f}$ may seem different by their definitions, a lemma in \cite{WeiCao} gives us that \begin{equation} \mu_f = \textnormal{min}\left\{ \sum_{j=1}^m \alpha_j \left| \alpha_j \in \mathbb{Q}_{\geq 0}, \sum_{j=1}^m \alpha_jD_j \in \mathbb{N}^n \right. \right\}. \end{equation} Intuitively, studying the dilation of $\Delta(f)$ is equivalent to studying linear combinations of the exponent vectors geometrically. Because of similarity, we can use both $D_f$ and $\Delta(f)$ to study $\mu_f$ and $\omega_f$.\footnote{Theorems in Chapter \ref{refine} which are dependent on these constants both use $\Delta(f)$ in the proofs given.}

 In fact, because of this similarity, we have a direct comparison of the two terms proven by \cite{ZanCao}. This, alongside a result of Adolphson and Sperber \cite{Adolph}, gives us the following inequalities: \begin{lemma} $\omega_f \geq \mu_f(q-1) \geq \frac{n(q-1)}{d}.$ \end{lemma}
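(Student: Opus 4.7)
The plan is to handle the two inequalities separately, using the rational reformulation of $\mu_f$ from Wei--Cao as the bridge between the integer optimization defining $\omega_f$ and the geometry of $\Delta(f)$.

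For the inequality $\omega_f \geq \mu_f(q-1)$, I would start with an optimal tuple $(k_1,\ldots,k_m)$ achieving the minimum in the definition of $\omega_f$, so $k_j \in \{0,1,\ldots,q-1\}$ and $\sum_j k_j D_j \in (q-1)\mathbb{N}^n$. Dividing by $q-1$, set $\alpha_j \defeq k_j/(q-1) \in \mathbb{Q}_{\geq 0}$; then $\sum_j \alpha_j D_j \in \mathbb{N}^n$, so $(\alpha_1,\ldots,\alpha_m)$ is a feasible point for the rational minimization characterizing $\mu_f$. This gives
\begin{equation*}
\mu_f \;\leq\; \sum_{j=1}^m \alpha_j \;=\; \frac{1}{q-1}\sum_{j=1}^m k_j \;=\; \frac{\omega_f}{q-1},
\end{equation*}
which rearranges to the desired inequality.

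For the inequality $\mu_f(q-1) \geq n(q-1)/d$, it suffices to show $\mu_f \geq n/d$ where $d = \deg f$. Take an optimal $(\alpha_1,\ldots,\alpha_m)$ for $\mu_f$ and let $(m_1,\ldots,m_n) \defeq \sum_j \alpha_j D_j \in \mathbb{N}^n$, so each $m_i \geq 1$. Since every exponent vector $D_j$ appearing in the support of some $f_i$ satisfies $\|D_j\|_1 = \sum_i d_{ij} \leq d$, summing coordinates gives
\begin{equation*}
n \;\leq\; \sum_{i=1}^n m_i \;=\; \sum_{j=1}^m \alpha_j \|D_j\|_1 \;\leq\; d \sum_{j=1}^m \alpha_j \;=\; d\cdot \mu_f,
\end{equation*}
which yields $\mu_f \geq n/d$ after dividing by $d$. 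Multiplying by $q-1$ finishes the second inequality.

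No step here looks like a serious obstacle; the only subtlety is making sure the rational characterization of $\mu_f$ from the Wei--Cao lemma is invoked correctly (so the scaling trick in the first part is legitimate) and that the degree bound $\|D_j\|_1 \leq d$ holds uniformly over the supports $\Gamma(f_1),\ldots,\Gamma(f_n)$ that make up $\Delta(f)$, which follows directly from $\deg f = \max_i \deg f_i$.
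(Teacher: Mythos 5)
Your proof is correct. Worth noting: the paper does not actually prove this lemma --- it is stated as a consequence of a comparison in Zan--Cao together with a result of Adolphson--Sperber, with only a two-dimensional picture of the latter offered as illustration. Your argument is a clean, self-contained reconstruction of what those references do. The first half is exactly the Zan--Cao observation: the feasible set for $\omega_f$ maps into the feasible set for the rational (Wei--Cao) characterization of $\mu_f$ under $\alpha_j = k_j/(q-1)$, and this direction needs neither the upper bound $k_j \leq q-1$ nor attainment of the minimum in the $\mu_f$ problem, since you only use feasibility to bound the infimum from above. The second half is the coordinate-sum form of Adolphson--Sperber's geometric argument that $\Delta(f)$ lies in the simplex $\{x \geq 0 : \sum_i x_i \leq d\}$, so a dilation $k\Delta(f)$ can only meet $\mathbb{N}^n$ (strictly positive lattice points, so $\sum_i m_i \geq n$) when $kd \geq n$; your version via $\sum_j \alpha_j \lVert D_j\rVert_1 \leq d\,\mu_f$ is the same estimate read off from the linear-combination characterization rather than from the polytope. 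The one cosmetic caveat is your phrase ``take an optimal $(\alpha_1,\dots,\alpha_m)$'': if one worries about whether the rational minimum is attained, replace it by a feasible tuple within $\varepsilon$ of the infimum and let $\varepsilon \to 0$; the inequality survives the limit. Otherwise there is nothing to fix.
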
 Not only does $\omega_f$ provide a better value set bound for nonpermutation polynomials, but \cite{ZanCao} gives sharp examples which improve previously known univariate bounds. For an illustration of the proof by Adolphson and Sperber in two dimensions, please refer to Figure \ref{polytopefigure} given below.

%\section{Comparision of constants} \label{Comparison}

%write \begin{equation*} f(x_1,...,x_n) = \sum_{j=1}^m c_jX^{D_j}, \hspace{0.5 pc} c_j \in \mathbb{F}_{q^n}^{*}, \, \, D_j = (d_{1j},...,d_{nj}) \in \mathbb{Z}_{\geq 0}^n, \hspace{0.5 pc} X^{D_j} = x_1^{d_{1j}}...x_n^{d_{nj}}.  \end{equation*} With this notation,

%It is easy to show that this new bound is an improvement over the previously known result by Mullen, Wan, and Wang described at (\ref{MWWBound}). Adolphson and Sperber gave an elementary proof in \cite{Adolph} that $\mu_f$ $\geq$ $n / \textrm{deg } f$ for all $f$ over any arbitrary vector space $\mathbb{F}_q^n$.

\begin{figure}[ht!] 
  \centering
\includegraphics[scale=1]{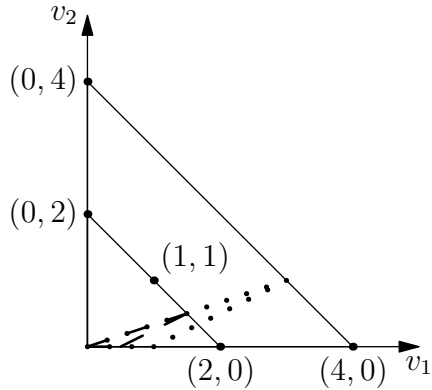}
 \caption{The polytopes of $f(x_1, x_2) = x_1 + x_1^3x_2$ and $h(x_1, x_2) = x_1^4 + x_2^4$, alongside their contractions by $\frac{n}{d} = \frac{2}{4}$. Note that both polynomials are degree 4, $\Delta(f) \cap \mathbb{N}^2 = \{(3,1)\}$, and $\left(\frac{2}{4}\Delta(f)\right) \cap \mathbb{N}^2 = \varnothing$, but $\left(\frac{2}{4}\Delta(h)\right) \cap \mathbb{N}^2 = \{(1,1)\}$. Therefore, $\mu_h = \frac{2}{4} < \mu_f = 1$.} \label{polytopefigure}
\end{figure}
\chapter{Essential Theorems and Concepts} \label{Prelim}

\section{Single Variable Value Set}
\label{Single}

To provide insight towards the proof of our main result, we will investigate upper bounds of $|V_f|$ for the case when $f$ is a single variable polynomial. Parts of this proof will generalize to the multivariate case.

\begin{theorem} \label{SingleVar} Let $f(x) \in \mathbb{F}_q[x]$ be a single variable polynomial of degree $d > 0$. If $|V_f| < q$, then \begin{eqnarray*}|V_f| \leq q - \frac{q-1}{d}.\end{eqnarray*}
\end{theorem}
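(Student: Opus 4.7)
The plan is to follow Turnwald's elementary-symmetric-function approach, which reproves the bound without recourse to $p$-adic liftings. Set $s := |V_f|$, write $V_f = \{a_1, \ldots, a_s\}$, and form
$$P(y) = \prod_{i=1}^s (y - a_i) = \sum_{k=0}^s (-1)^k e_k\, y^{s-k} \in \mathbb{F}_q[y],$$
with $e_k$ the $k$-th elementary symmetric polynomial in $a_1,\ldots,a_s$. Since $f(\mathbb{F}_q)\subseteq V_f$, the polynomial $P(f(X))$ vanishes on all of $\mathbb{F}_q$, hence $X^q - X$ divides $P(f(X))$ in $\mathbb{F}_q[X]$.

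Next I would introduce the power sums $S_k := \sum_{x\in \mathbb{F}_q} f(x)^k$ and use two complementary facts: (i) $S_k = 0$ whenever $kd < q-1$, because $f(X)^k$ has degree $kd$ and $\sum_{x\in\mathbb{F}_q} x^m = 0$ for $0 \le m < q-1$; and (ii) from $P(f(x)) = 0$, multiplying by $f(x)^j$ and summing over $\mathbb{F}_q$ produces the linear recurrence
$$S_{s+j} - e_1 S_{s+j-1} + \cdots + (-1)^s e_s S_j = 0 \qquad (j \ge 0).$$
Writing $N_a := |f^{-1}(a)| \ge 1$, the vanishing in (i) becomes a family of Vandermonde-type relations $\sum_{a \in V_f} N_a a^k = 0$ for $k = 0, 1, \ldots, \lceil (q-1)/d\rceil - 1$ on the vector $(N_a)_{a \in V_f}\in\mathbb{F}_q^{s}$.

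I then argue by contradiction: assume $|V_f| > q - (q-1)/d$, so $r := q - s < (q-1)/d$, i.e., $rd \le q-2$. I would bring in the complementary polynomial $P^c(y) := (y^q - y)/P(y) = \prod_{b \notin V_f}(y - b)$ of degree $r$, which satisfies $P^c(a) = -1/P'(a)$ for $a \in V_f$. Since $P^c(f(X))$ is nowhere zero on $\mathbb{F}_q$ and has degree $rd \le q-2$, the sums $\sum_{x} P^c(f(x))\, f(x)^j$ vanish for small $j$, producing a second family of identities $\sum_{a\in V_f} N_a a^j / P'(a) = 0$. Using also higher powers $(P^c)^t$, whose products with $f(x)^j$ have vanishing sums when $(tr + j)d \le q-2$, together with the Lagrange identity stating that $\sum_{a\in V_f} g(a)/P'(a)$ is the coefficient of $y^{s-1}$ in $g(y) \bmod P(y)$, the combined system of linear relations on $(N_a)$ ought to have full rank $s$, forcing $N_a \equiv 0 \pmod{p}$ for every $a \in V_f$. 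Then $p \cdot s \le \sum_a N_a = q$, giving $s \le q/p$, which for $d \ge 2$ is strictly less than $q - (q-1)/d$ and contradicts the assumption. The case $d = 1$ is trivial, since a non-bijective linear polynomial over $\mathbb{F}_q$ must be constant, so $|V_f|=1 = q - (q-1)/d$.

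The hardest step is verifying that the amalgamated system of linear relations on $(N_a)_{a\in V_f}$ actually has rank $s$. One has to track carefully how many independent constraints arise as $(t,j)$ ranges over all pairs with $(tr + j)d \le q - 2$, and then exploit the distinctness of the $a_i$ together with Newton's identities to rule out spurious linear dependencies among the functionals $a \mapsto a^j/P'(a)^t$. If a direct rank count falls short, the fallback is to iterate the recurrence on the $S_k$'s symmetrically for both $P$ and $P^c$ until the distinctness of the $a_i$ forces the collapse.
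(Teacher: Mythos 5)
Your route is genuinely different from the paper's --- the paper deduces Theorem \ref{SingleVar} from the $p$-adic quantity $U(f)$ together with Lemma \ref{Ulemma} (the latter quoted from Wan--Shiue--Chen \cite{WSCsharp}), while you aim for a Turnwald-style argument with no liftings --- but the pivotal step of your plan, that the amalgamated linear system on $(N_a)_{a\in V_f}$ has rank $s$, is not merely unproven: it is false. Every relation you produce has the form $\sum_{a\in V_f} N_a Q(a)=0$ for a polynomial $Q\in\mathbb{F}_q[y]$ of degree at most $\lfloor (q-2)/d\rfloor$; in family (i), $Q(y)=y^k$ with $kd\le q-2$, and in family (ii), $Q(y)=y^jP^c(y)^t$ with $\deg Q=j+tr$ and $(j+tr)d\le q-2$. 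Since a functional of this shape depends only on the restriction of $Q$ to the $s$ points of $V_f$, the span of \emph{all} such functionals has dimension at most $\lfloor (q-2)/d\rfloor+1$. Under your contradiction hypothesis $s>q-\frac{q-1}{d}$ one checks that $\lfloor (q-2)/d\rfloor+1\le q-\frac{q-1}{d}<s$ for every $d\ge 2$, so the rank is strictly less than $s$ no matter how many pairs $(t,j)$ you adjoin. The homogeneous system therefore always admits nonzero solutions, and you cannot conclude $N_a\equiv 0\pmod p$; the intended contradiction never materializes. (The case $d=1$ you handle correctly, but it is the only case your counting survives.)

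To repair the argument along Turnwald's actual lines \cite{Turnwald}, one should abandon the vector $(N_a)$ and instead work with the full value \emph{multiset}: set $G(y)=\prod_{c\in\mathbb{F}_q}\bigl(y-f(c)\bigr)$ and let $m$ be the least positive index with $e_m\neq 0$ among the elementary symmetric functions of the $q$ values $f(c)$. If $f$ is not a permutation then $G(y)\neq y^q-y$, and every $a\in V_f$ is a common root of $G(y)$ and $y^q-y$, hence a root of the nonzero polynomial $G(y)-(y^q-y)$, whose degree is at most $\max\{q-m,1\}$; this yields $|V_f|\le q-m$, which is the elementary-symmetric analogue of Lemma \ref{Ulemma}. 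The remaining task is to show $m\ge \frac{q-1}{d}$, and this is precisely the delicate point in characteristic $p$: Newton's identities only express $k\,e_k$ in terms of the power sums $S_1,\dots,S_k$, so the vanishing of $S_k$ for $kd<q-1$ does not immediately force $e_k=0$ when $p\mid k$. This obstruction is exactly what the paper's condition $S_k(f)\not\equiv 0\pmod{pk}$ after Teichm\"uller lifting is designed to circumvent, and any lifting-free proof must reproduce Turnwald's treatment of this step rather than the rank argument you propose.
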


The proof of this theorem relies on the following definition:

\begin{definition}[The quantity $U(f)$] Let $\mathbb{Z}_q$ denote the ring of $p$-adic integers with uniformizer $p$ and residue field $\mathbb{F}_q$. Also let $\tilde{f}(x) \in \mathbb{Z}_q[x]$ be the lifting of $f$ taking coefficients from the Teichm$\ddot{\textrm{u}}$ller lifting $L_q \subset $ $\mathbb{Z}_q$ of $\mathbb{F}_q$. Then we define $U(f)$ to be the smallest positive integer $k$ such that the sum \begin{eqnarray*} S_k(f) \defeq \sum_{x \in L_q} \tilde{f}(x)^k \not\equiv 0 \textrm{ (mod } pk). \end{eqnarray*}
\end{definition}

By taking into account the following sum,
\begin{eqnarray} \label{charsum}   \sum_{x \in L_q} x^k = \left\{     \begin{array}{ll}       0, & q-1 \nmid k,\\        q-1, & q-1 \mid k, k \neq 0,\\  q, & k = 0,     \end{array}   \right.\end{eqnarray} 
and remembering that we are only summing over a finite number of terms, we have that, for $f$ not identically zero, $\frac{q-1}{d} \leq U(f)$. We also have that if $f$ is a permutation polynomial, then $S_k(f) = S_k(x) = \sum_{x \in L_q} x^k$, implying $U(f) = q-1$. The fact that $U(f)$ exists for all nonpermutation polynomials as well is a corollary of lemma \ref{Ulemma}. Overall, the lemma and the above argument give us that

$$ \frac{q-1}{d} \leq U(f) \leq q-1. $$
Theorem \ref{SingleVar} also follows directly from the lemma \ref{Ulemma}:

\begin{lemma} \label{Ulemma} If $\left|V_f \right| < q$, then \begin{eqnarray*} \left|V_f \right| \leq q - U(f).\end{eqnarray*}
\end{lemma}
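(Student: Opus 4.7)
The approach is to pass to the $p$-adic lifting and compare polynomials after reducing mod $p$. Let $\tilde g(T) \defeq \prod_{x \in L_q}(T - \tilde f(x)) \in \mathbb{Z}_q[T]$ and write $\tilde g(T) = T^q + \sum_{k=1}^q(-1)^k e_k T^{q-k}$, where $e_k$ is the $k$-th elementary symmetric polynomial of the multiset $\{\tilde f(x) : x \in L_q\}$. Its reduction modulo $p$ is $\bar g(T) = \prod_{v \in V_f}(T - v)^{m_v} \in \mathbb{F}_q[T]$, with $m_v = |f^{-1}(v)|$. The central claim I will prove is that $\bar e_k = 0$ in $\mathbb{F}_q$ for every $1 \leq k \leq U(f) - 1$.

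To prove the central claim, I use the generating series $G(T) \defeq \prod_{x \in L_q}(1 - \tilde f(x) T) = \sum_{k \geq 0}(-1)^k e_k T^k$, whose formal logarithm is $\log G(T) = -\sum_{k \geq 1} S_k(f) T^k / k$. Setting $H(T) = G(T) - 1$ and expanding $\log(1 + H(T))$ as a formal series, the coefficient of $T^k$ rearranges to
\[
e_k + \sum_{n \geq 2}\frac{(-1)^{n-1}}{n}\sum_{\substack{j_1+\cdots+j_n=k \\ j_i \geq 1}}\prod_{i=1}^n e_{j_i} \;=\; (-1)^{k+1}\,\frac{S_k(f)}{k}.
\]
The base case $k = 1$ reduces to $e_1 = S_1(f) \in p\mathbb{Z}_q$, which is immediate from the defining property of $U(f)$. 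For the inductive step, the right-hand side lies in $p\mathbb{Z}_q$ by that same definition, and for every $n \geq 2$ the inductive hypothesis gives each $e_{j_i} \in p\mathbb{Z}_q$ (since each $j_i < k$), so $\prod_i e_{j_i} \in p^n\mathbb{Z}_q$; since $v_p(p^n/n) = n - v_p(n) \geq 1$ for all $n \geq 2$, the whole sum on the left, apart from $e_k$ itself, also lies in $p\mathbb{Z}_q$. Hence $e_k \in p\mathbb{Z}_q$.

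With the central claim in hand, the coefficients of $T^{q-1}, T^{q-2}, \ldots, T^{q-U(f)+1}$ in $\bar g(T) - (T^q - T)$ all vanish, so this difference has degree at most $q - U(f)$. On the other hand, both $\bar g(T)$ and $T^q - T = \prod_{y \in \mathbb{F}_q}(T - y)$ are divisible by $\prod_{v \in V_f}(T - v)$ in $\mathbb{F}_q[T]$, so is their difference; hence it is either zero or has degree at least $|V_f|$. The zero case forces $\bar g(T) = T^q - T$, whence $V_f = \mathbb{F}_q$ (since $T^q - T$ is squarefree with $q$ distinct roots), contradicting $|V_f| < q$. The nonzero case gives $|V_f| \leq q - U(f)$, as desired.

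The principal obstacle is the inductive step in the central claim: the factors $1/n$ appearing in the logarithm expansion introduce potential $p$-adic denominators that must be absorbed. The strengthened condition $S_k(f) \not\equiv 0 \pmod{pk}$ (rather than merely $\pmod{p}$) in the definition of $U(f)$ is exactly what is needed for this, together with the elementary inequality $n \geq 1 + v_p(n)$ for $n \geq 2$, which ensures that the higher-order products of the $e_{j_i}$'s collapse modulo $p$ uniformly in the characteristic.
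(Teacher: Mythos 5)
Your argument is correct, but note that the paper never proves Lemma \ref{Ulemma} at all: it defers to Wan--Shiue--Chen \cite{WSCsharp} and merely mentions Lenstra's alternate proof, so there is no in-paper argument to compare against. What you have written is a self-contained proof by the symmetric-function route: lift to $\mathbb{Z}_q$, form the characteristic polynomial $\tilde g(T)=\prod_{x\in L_q}(T-\tilde f(x))$, and convert the hypothesis on the power sums $S_k(f)$ into vanishing of the elementary symmetric functions $\bar e_k$ modulo $p$. This is essentially Turnwald's method upgraded to the $p$-adic setting, and the upgrade is exactly where your proof earns its keep: the Newton-type recursion extracted from $\log G(T)$ carries denominators $1/n$, and these are absorbed because the inductive hypothesis puts $\prod_i e_{j_i}$ in $p^n\mathbb{Z}_q$ while $n-v_p(n)\geq 1$ for $n\geq 2$; the stronger condition $S_k(f)\equiv 0\ (\mathrm{mod}\ pk)$ (rather than mod $p$) in the definition of $U(f)$ handles the $n=1$ term. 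The endgame (the difference $\bar g(T)-(T^q-T)$ is a nonzero multiple of $\prod_{v\in V_f}(T-v)$ of degree at most $q-U(f)$) is the standard and correct conclusion.

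One loose end to flag: the claim that the coefficients of $T^{q-1},\dots,T^{q-U(f)+1}$ of $\bar g(T)-(T^q-T)$ all vanish tacitly assumes $U(f)\leq q-1$, since otherwise the monomial $-T$ of $T^q-T$ falls inside that range and contributes. Your central claim closes this gap for free: if $U(f)\geq q$ then $\bar e_1=\cdots=\bar e_{q-1}=0$, so $\bar g(T)=T^q+c=(T-c')^q$ for some $c'$, forcing $f$ to be constant as a function on $\mathbb{F}_q$. In that degenerate case the stated inequality cannot hold in any event (e.g.\ a nonzero constant has $U(f)=q$ and $|V_f|=1>0$), so the lemma is implicitly restricted to maps with $|V_f|\geq 2$, for which your argument shows $U(f)\leq q-1$ automatically. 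Adding a sentence to this effect would make the proof airtight.
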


The proof of this result is given by Wan, Shiue, and Chen in \cite{WSCsharp}, and their paper also includes more details regarding this lemma.  Mullen, Wan, and Wang \cite{MWWValue} also describe an alternate proof of this lemma presented to them by Lenstra through private communication.

\section{From Single Variable to Multivariable}
\label{From2}

Let $f(x_1,...,x_n)=(f_1(x_1,...,x_n),...,f_n(x_1,...,x_n))$ be a polynomial vector, and note $\textrm{deg } f = \textrm{max}_i\{\textrm{deg } f_i\}$. This maps the vector space $\mathbb{F}_q^n$ to itself. Now, take a basis $e_1,...,e_n$ of $\mathbb{F}_{q^n}$ over $\mathbb{F}_q$. Denote $x = x_1e_1 + \cdots + x_ne_n$ and define \begin{eqnarray*}g(x) \defeq f_1(x_1,...,x_n)e_1+ \cdots +f_n(x_1,...,x_n)e_n. \end{eqnarray*} In this way, we can think of the function $g$ as a non-constant univariate polynomial map from the finite field $\mathbb{F}_{q^n}$ to itself. Even better, we have the equality $|V_f| = |g(\mathbb{F}_{q^n})|$. Therefore, using Lemma~\ref{Ulemma}, we know \begin{eqnarray*} \textrm{if } |V_f| < q^n, \textrm{then } |V_f| \leq q^n - U(g),\end{eqnarray*} where $g$ is viewed as a univariate polynomial.

\vspace{1pc}

Unfortunately, as a univariate polynomial, we do not have good control of the univariate degree of $g$ in relation to the multivariate degree of $f$. Even if one were to construct a closed form for $g(x)$ using methods such as Lagrange Interpolation, the degree of $g$ would likely be high enough as to make the resulting upper bound on $|V_f|$ trivial. Because of these issues with the degree of $g$, we cannot use the bounds from the previous section directly, and must rely on another method to bound $U(g)$.

\vspace{1pc}

Previously, we introduced $g(x)$ as a univariate polynomial. However, using a basis $e_1,...,e_n$ of $\mathbb{F}_{q^n}$ over $\mathbb{F}_q$ as before, we can also define a multivariate polynomial \begin{eqnarray*}g(x_1,...,x_n) \defeq f_1(x_1,...,x_n)e_1+ \cdots +f_n(x_1,...,x_n)e_n \end{eqnarray*} mapping the vector space $\mathbb{F}_q^n$ into the field $\mathbb{F}_{q^n}$. In this sense, $g$ as a multivariate polynomial shares some important properties with $f$ as a polynomial vector, such as the fact that deg($g$) = $\textrm{max}_i\{\textrm{deg } f_i\}$. Whereas the paper by Mullen, Wan, and Wang determine a bound for $U(g)$ relying on the multivariate degree of $f$, in this paper we will use the Newton polytope of the multivariate polynomial $g(x_1,...,x_n)$ to improve upon these bounds. With this in mind, we define $\Delta(f) \defeq \Delta(g(x_1,...,x_n))$, $\mu_f \defeq \mu_{g(x_1,...,x_n)}$, and prove the original polytope bound in \cite{Mypaper}.   

\section{Restatement of First Polytope Bound and Proof}
\label{Multi}

\begin{theorem} \label{mubound} Let $f(x_1,...,x_n)=(f_1(x_1,...,x_n),...,f_n(x_1,...,x_n))$ be a polynomial vector over the vector space $\mathbb{F}_q^n$. If $|V_f| < q^n$, then \begin{eqnarray*}|V_f| \leq q^n - \textnormal{min}\{q,\hspace{.25pc} \mu_f(q-1)\}.\end{eqnarray*}
\end{theorem}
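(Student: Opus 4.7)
The plan is to bound from below the $p$-adic invariant $U(g)$ associated with the multivariate polynomial $g(x_1,\ldots,x_n) := f_1 e_1 + \cdots + f_n e_n \in \mathbb{F}_{q^n}[x_1,\ldots,x_n]$, where $e_1,\ldots,e_n$ is a fixed $\mathbb{F}_q$-basis of $\mathbb{F}_{q^n}$. I would begin by invoking the multivariate analogue of Lemma~\ref{Ulemma} (in the spirit of \cite{MWWValue}), so that $|V_f| < q^n$ implies $|V_f| \leq q^n - U(g)$, where $U(g)$ is the least positive integer $k$ with
\[
S_k(g) \;:=\; \sum_{\mathbf{x}\in L_q^n} \tilde g(\mathbf{x})^k \;\not\equiv\; 0 \pmod{pk},
\]
and $\tilde g$ denotes the coefficientwise Teichm\"uller lift of $g$. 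It then suffices to prove $U(g) \geq \min\{q,\mu_f(q-1)\}$.

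Fix a positive integer $k < \min\{q,\mu_f(q-1)\}$. I expand $\tilde g^k$ by the multinomial theorem into a sum of monomials of the shape $c_{\mathbf{k}}\,X^{\sum_j k_j D_j}$ indexed by tuples $\mathbf{k}=(k_1,\ldots,k_m)\in\mathbb{Z}_{\geq 0}^m$ with $\sum_j k_j = k$, where $D_1,\ldots,D_m$ are the exponent vectors of $g$ (drawn from $\Gamma(f_1)\cup\cdots\cup\Gamma(f_n)\cup\{(0,\ldots,0)\}$). Because summation over $L_q^n$ factors coordinatewise, the character identity \eqref{charsum} says that each coordinate sum equals $q$ when the corresponding exponent is zero, equals $q-1$ when it is a positive multiple of $q-1$, and vanishes otherwise. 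Consequently a tuple $\mathbf{k}$ contributes to $S_k(g)$ only when $D := \sum_j k_j D_j$ lies in $(q-1)\mathbb{Z}_{\geq 0}^n$.

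The Newton polytope input is the crucial filter. If some contributing tuple $\mathbf{k}$ produced $D \in (q-1)\mathbb{N}^n$ (every coordinate a \emph{positive} multiple of $q-1$), then $\alpha_j := k_j/(q-1)\in\mathbb{Q}_{\geq 0}$ would satisfy $\sum_j \alpha_j D_j = D/(q-1)\in\mathbb{N}^n$, forcing $k/(q-1)=\sum_j \alpha_j \geq \mu_f$ and contradicting $k<\mu_f(q-1)$. Hence every contributing $\mathbf{k}$ yields a $D$ with at least one zero coordinate, and the corresponding factor supplies a $q=p^s$. Therefore $v_p(S_k(g)) \geq s$, and since $k<q$ we also have $v_p(pk)=1+v_p(k)\leq s$, so $pk\mid S_k(g)$. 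This proves $U(g)\geq \min\{q,\mu_f(q-1)\}$.

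The main obstacle, to my mind, is the opening move: transporting Lemma~\ref{Ulemma} (phrased for $g$ as a \emph{univariate} polynomial on $\mathbb{F}_{q^n}$) into a statement about the multivariate character sum $\sum_{\mathbf{x}\in L_q^n}\tilde g(\mathbf{x})^k$ that I actually want to manipulate. This identification, whose nontriviality stems from non-additivity of Teichm\"uller lifting across the basis $\{e_i\}$, is exactly the bookkeeping carried out in \cite{MWWValue}; once it is in hand, everything else is a clean combination of the multinomial expansion, the character identity \eqref{charsum}, the definition of $\mu_f$, and an elementary $p$-adic valuation check. A small sanity check: when $f$ depends on a proper subset of $\{x_1,\ldots,x_n\}$, $\mu_f=\infty$ and the bound reduces to $U(g)\geq q$, which the argument above still delivers since a missing variable forces every relevant $D$ to have a zero coordinate.
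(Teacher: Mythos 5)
Your proposal is correct and follows essentially the same route as the paper: reduce to the univariate Lemma~\ref{Ulemma} via the identification $g = f_1e_1+\cdots+f_ne_n$, expand $\tilde g^k$ into monomials, use the character sum identity \eqref{charsum} to discard monomials with an exponent not divisible by $q-1$, rule out exponent vectors in $(q-1)\mathbb{N}^n$ via $\mu_f$ (you use the linear-combination characterization of $\mu_f$, the paper uses the dilation $\frac{k}{q-1}\Delta(g)$ directly — these are equivalent by the lemma of Wei--Cao quoted in the paper), and close with the valuation estimate $1+v_p(k)\leq v_p(q)$ from $k<q$. The obstacle you flag (transporting Lemma~\ref{Ulemma} to the multivariate character sum) is handled the same way in the paper, by deferring to \cite{MWWValue} and \cite{WSCsharp}.
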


\begin{proof} First, construct $g$ from our polynomial vector $f$, as we did in Section \ref{From2}. Viewing $g$ as a univariate polynomial $g(x)$, we are allowed to apply Lemma~\ref{Ulemma} to bound $|V_f|$ using $U(g)$. We then consider $g$ as multivariate $g(x_1,...,x_n)$, which allows us to define $\Delta(g)$ and $\mu_g$. Noting that $\Delta(f) = \Delta(g)$ and $\mu_f = \mu_g$ by our definition in Section \ref{From2}, it suffices to prove the following lemma on $U(g)$:
\end{proof}

\begin{lemma} \label{Uglemma} $U(g) \geq \textnormal{min}\{\mu_f(q-1),\hspace{.25pc} q\}.$
\end{lemma}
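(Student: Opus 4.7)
The plan is to expand $\tilde{g}(x_1, \ldots, x_n)^k$ via the multinomial theorem, apply the character-sum formula \eqref{charsum} coordinate-wise to each resulting monomial, and then combine the geometric constraint $k < \mu_f(q-1)$ with the arithmetic constraint $k < q$ to force $S_k(g) \equiv 0 \pmod{pk}$.

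Lifting $g$ coordinate-wise to $\tilde{g} = \tilde{f}_1 \tilde{e}_1 + \cdots + \tilde{f}_n \tilde{e}_n \in \mathbb{Z}_{q^n}[x_1, \ldots, x_n]$, with $\tilde{e}_i$ an unramified lift of the basis $e_i$, the multinomial theorem gives
\begin{equation*}
\tilde{g}^k = \sum_{a_1 + \cdots + a_n = k} \binom{k}{a_1, \ldots, a_n}\, \tilde{e}_1^{a_1} \cdots \tilde{e}_n^{a_n}\, \tilde{f}_1^{a_1} \cdots \tilde{f}_n^{a_n}.
\end{equation*}
Every monomial in $\tilde{f}_1^{a_1} \cdots \tilde{f}_n^{a_n}$ arises by picking one element of $\Gamma(f_i)$ for each of the $a_i$ copies of $\tilde{f}_i$, so its exponent vector is a sum of $k$ elements of $\bigcup_i \Gamma(f_i) \subseteq \Delta(f)$; by convexity this exponent vector lies in $k\Delta(f)$. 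Summing over $\vec{x} \in L_q^n$, the contribution from a monomial with exponent $D = (b_1, \ldots, b_n)$ factors as $\prod_{i=1}^n \sum_{x_i \in L_q} \tilde{x}_i^{b_i}$, which by \eqref{charsum} vanishes unless each $b_i$ is either $0$ or a positive multiple of $q-1$, and otherwise equals $(q-1)^s q^{n-s}$, where $s$ is the number of coordinates that are positive multiples of $q-1$.

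Next, I would invoke $k < \mu_f(q-1)$: since $\Delta(f)$ is a rational polytope the infimum defining $\mu_f$ is attained, and $k < \mu_f(q-1)$ gives $\tfrac{k}{q-1}\Delta(f) \cap \mathbb{N}^n = \varnothing$, equivalently $k\Delta(f) \cap (q-1)\mathbb{N}^n = \varnothing$. Hence every exponent vector appearing in $\tilde{g}^k$ has at least one coordinate that is either $0$ or not a positive multiple of $q-1$, so the corresponding character sum is either identically zero or divisible by $q = p^r$. Summing all the pieces gives $S_k(g) \equiv 0 \pmod{q}$.

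Combining this with $k < q$ finishes the argument: $k < p^r$ forces $v_p(k) \leq r - 1$ and thus $v_p(pk) = 1 + v_p(k) \leq r = v_p(q)$, so in $\mathbb{Z}_q$ the divisibility $q \mid S_k(g)$ already implies $pk \mid S_k(g)$. Hence $S_k(g) \equiv 0 \pmod{pk}$ for every $k < \min\{\mu_f(q-1),\, q\}$, which is exactly the statement $U(g) \geq \min\{\mu_f(q-1),\, q\}$. The main obstacle is the geometric step: translating the dilation-based condition $k < \mu_f(q-1)$ into the lattice statement $k\Delta(f) \cap (q-1)\mathbb{N}^n = \varnothing$, and thereby controlling the support of \emph{every} monomial in the multinomial expansion uniformly. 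Once that is in hand, the $p$-adic bookkeeping — the interplay between the $q$-divisibility coming from zero coordinates in the character sum and the $pk$-divisibility demanded by the definition of $U(g)$ — is automatic from the fact that $q$ is a genuine $p$-power.
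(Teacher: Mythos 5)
Your proposal is correct and follows essentially the same route as the paper: expand $g^k$ into monomials whose exponent vectors lie in $k\Delta(f)$, use the character-sum formula \eqref{charsum} to see each surviving monomial sum is divisible by $q^{n-\ell_j}$, invoke the definition of $\mu_f$ (via the monotonicity of dilation, since $0 \in \Delta(f)$) to force at least one zero coordinate when all exponents are multiples of $q-1$, and finish with $k < q \Rightarrow 1 + v_p(k) \leq v_p(q)$. The only cosmetic difference is that the paper phrases the geometric step as a per-monomial contradiction while you state the emptiness $k\Delta(f) \cap (q-1)\mathbb{N}^n = \varnothing$ up front.
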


\begin{proof} Assume the coefficients of $g(x_1,...,x_n)$ are lifted to characteristic zero over $L_{q^n}$, our Teichm$\ddot{\textrm{u}}$ller lifting of $\mathbb{F}_{q^n}$. Remember that $U(g)$ is defined over univariate polynomials to be the smallest positive integer $k$ such that \begin{eqnarray*} S_k(g) \defeq \sum_{x \in L_{q^n}} {g}(x)^k \not\equiv 0 \textrm{ (mod } pk). \end{eqnarray*} However, using $x = x_1e_1 + \cdots + x_ne_n$ as in Section \ref{From2}, we can rewrite $S_k(g)$ in terms of multivariate $g(x_1,...,x_n)$. This means $U(g)$ is the smallest positive integer $k$ such that  \begin{eqnarray*} S_k(g) = \sum_{(x_1,...,x_n) \in L_q^n} {g}(x_1,...,x_n)^k \not\equiv 0 \textrm{ (mod } pk). \end{eqnarray*}

Let $k \in \mathbb{Z}_{>0}$ be such that $k < \textnormal{min}\{\mu_f(q-1),\hspace{.25pc} q\}.$ Expand $g(x_1,...,x_n)^k=\sum_{j=1}^m a_jX^{V_j}$ as a polynomial in the $n$ variables $x_1,...,x_n$ (see $(\ref{polynota})$). Since $S_k(g)$ is a finite sum, it can be broken up over the monomials of $g(x_1,...,x_n)^k$. Therefore, it suffices to prove \begin{eqnarray} \label{xvjsum} \sum_{(x_1,...,x_n) \in L_q^n}X^{V_j} \equiv 0 \textrm{ (mod } pk), \hspace{.25pc} 1 \leq j \leq m, \, m = \textnormal{\# monomials of } g^k(x_1,...,x_n).\end{eqnarray} If we denote $\ell_j \defeq \#\{v_{ij}, \hspace{.25 pc} 1 \leq i \leq n| v_{ij} \neq 0 \}$, i.e. $\ell_j$ denotes the number of nonzero $v_{ij}$'s with $1 \leq i \leq n$, then we have exactly $n - \ell_j$ zero $v_{ij}$'s, implying that \begin{eqnarray*} \sum_{(x_1,...,x_n) \in L_q^n}X^{V_j} \equiv 0 \textrm{ (mod } q^{n-\ell_j}). \end{eqnarray*} Now let $v_p$ denote the $p$-adic valuation satisfying $v_p(p)=1$. If the inequality \begin{eqnarray*} v_p(q)(n-\ell_j) \geq 1 + v_p(k) \end{eqnarray*} is satisfied, then (\ref{xvjsum}) is true and we are done.

\vspace{1 pc}

Considering $X^{V_j} = x_1^{v_{1j}}...x_n^{v_{nj}}$, the sum on the left side is identically zero if one of the $v_{ij}$ is not divisible by $q-1$ (see (\ref{charsum})). Thus, we shall assume that all $v_{ij}$'s are divisible by $q-1$ (Otherwise (\ref{xvjsum}) is satisfied and we are done without even using our inequality on $k$). %Then the total degree of $X^{V_j}$ is \begin{eqnarray*} v_{1j}+ \cdots +v_{nj} \geq (q-1)\ell_j.\end{eqnarray*} 

\vspace{1pc}

Now, the lattice points of $g$ are contained within $\Delta(g)$ by definition, and this implies our lattice points $V_j$ of $g^k$ are contained within $k\Delta(g)$, the dilation of the polytope $\Delta(g)$ by $k$. But since $(q-1) \mid v_{ij}$, we have that $V_j \in (q-1)\mathbb{Z}^n_{\geq 0}$ as well. 

\vspace{1pc}

If we further assume that $V_{j}$ has no zero coordinates, i.e. $\ell_j = n$, this implies \begin{eqnarray*}\left(\frac{k}{q-1}\Delta(g)\right) \cap \mathbb{Z}^n_{>0} \neq \varnothing.\end{eqnarray*} This statement tells us, by the definition of $\mu_f$, that $\frac{k}{q-1} \geq \mu_f$. In other words, \begin{eqnarray*} k \geq \mu_f(q-1). \end{eqnarray*} This contradicts our assumption that $k < \textnormal{min}\{\mu_f(q-1),\hspace{.25pc} q\} \leq \mu_f(q-1).$

\vspace{1pc}

Therefore, when $k < \textnormal{min}\{\mu_f(q-1),\hspace{.25pc} q\},$ we have that $\ell_j < n$, and $n - \ell_j > 0.$ This case, since $k < q,$ gives us $q \nmid k,$ and \begin{eqnarray*} 1 + v_p(k) \leq v_p(q) \leq v_p(q)(n - \ell_j). \end{eqnarray*} This implies that \begin{eqnarray*} S_k(g) \equiv 0 \textrm{ (mod } q^{n-\ell_j}) \equiv 0 \textrm{ (mod } p^{1 + v_p(k)}) \equiv 0 \textrm{ (mod } pk) \end{eqnarray*} and we are done. Lemma~\ref{Uglemma} and the main result of \cite{Mypaper} are proved. \end{proof}

%\begin{figure}
%\begin{verbatim}
%#include <iostream>
%int main(int argc, char** argv) {
%  std::cout << "Hello World." << std::endl;
%  return 0;
%}
%\end{verbatim}
%  \caption{Example source code.}
%  \label{fig:sourcecode}
%\end{figure}

%\section{Background}

%\begin{table}
%  \centering
%  \begin{tabular}{|rr|r|}
%    \hline
%    $x$ & $y$ & $z$ \\
%    \hline
%    14 & 12 & -2 \\
%    0 & 33 & -25 \\
%    -3 & 11 & 22 \\
%    4 & 4 & 6 \\
%    \hline
%  \end{tabular}
%  \caption{Example coordinates.}
%  \label{tab:coordinates}
%\end{table}

%%% Local Variables: ***
%%% mode: latex ***
%%% TeX-master: "thesis.tex" ***
%%% End: ***

\chapter{Refining Cardinality Bounds} \label{refine}

\section{A Method to Improve Prior Proofs} \label{newmethod}

One of the major limitations of the use of $p$-adic liftings in the proof of Lemma \ref{Uglemma} is that our assumption only allowed us to show $S_k(f) \equiv 0$ (mod $q$). Indeed, if we immediately split $S_k(f)$ amongst the monomials of the multivariate polynomial $g(x_1,...,x_n)^k$, we lose much of the structure and divisibility of each term. Therefore, we will manipulate our summand to leverage a larger $p$-adic valuation before splitting it into monomials. To do this, we need the following lemma:

\begin{lemma}\label{ppowered} Let $x_1,...,x_n$ be in a commutative ring $R$, and let $e \in \mathbb{N}$. Then \begin{equation*} \begin{split} (x_1 + \cdots + x_n)^{p^e} = x_1^{p^e} + \cdots + x_n^{p^e} & + ph_1(x_1^{p^{e-1}},...,x_n^{p^{e-1}}) + p^2h_2(x_1^{p^{e-2}},...,x_n^{p^{e-2}}) \\ & + \cdots + p^eh_e(x_1,...,x_n) \end{split}\end{equation*} where $h_t(x_1^{p^{e-t}},...,x_n^{p^{e-t}}) \in R[x_1,...,x_n]$ is such that \textnormal{deg} $h_t(x_1,...,x_n) = p^t$.
\end{lemma}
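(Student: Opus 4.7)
The plan is to prove the identity directly via the multinomial theorem, grouping terms by the $p$-adic valuation of their coefficients. I expand
$$(x_1 + \cdots + x_n)^{p^e} = \sum_{K} \binom{p^e}{k_1, \ldots, k_n} x_1^{k_1} \cdots x_n^{k_n},$$
where $K = (k_1, \ldots, k_n)$ ranges over compositions of $p^e$ into $n$ nonnegative parts. Using Legendre's formula $v_p(m!) = (m - s_p(m))/(p-1)$, with $s_p$ the base-$p$ digit sum, the coefficient valuation is
$$t(K) := v_p\binom{p^e}{k_1, \ldots, k_n} = \frac{\sum_i s_p(k_i) - 1}{p-1}.$$

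The main obstacle, and the heart of the argument, is the following divisibility claim: \emph{if $t(K) \leq e$, then $p^{e - t(K)}$ divides $k_i$ for every $i$.} I will prove this by tracking carries in the base-$p$ addition $k_1 + \cdots + k_n = p^e$. Setting $v = \min_i v_p(k_i)$, every $k_i$ has zero digit at positions below $v$, so no carries occur there. At position $v$, some $k_i$ contributes a positive digit while the target sum $p^e$ has digit $0$ (since $v < e$ in the nontrivial case), so the column sum must be a positive multiple of $p$, forcing a carry into position $v+1$. Arguing inductively, a carry is forced at each of positions $v, v+1, \ldots, e-1$, so by Kummer's theorem $t(K) \geq e - v$, which rearranges to $p^{e - t(K)} \mid k_i$.

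With the divisibility claim in hand, I group the multinomial terms by $t(K)$. The class $t(K) = 0$ corresponds to no carries, which forces one $k_i = p^e$ and the others zero, contributing exactly $x_1^{p^e} + \cdots + x_n^{p^e}$. For each $1 \leq t \leq e - 1$, the class $\{K : t(K) = t\}$ contributes $p^t$ times the polynomial
$$h_t(X_1, \ldots, X_n) := \sum_{K : t(K) = t} \frac{\binom{p^e}{K}}{p^t}\, X_1^{k_1/p^{e-t}} \cdots X_n^{k_n/p^{e-t}},$$
which is well-defined (and has integer coefficients) by the divisibility claim; its total degree is $\sum_i k_i / p^{e-t} = p^e/p^{e-t} = p^t$, so $\deg h_t = p^t$, and substituting $X_i \mapsto x_i^{p^{e-t}}$ recovers the monomials in the original expansion. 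Finally, I collect all remaining terms (those with $t(K) \geq e$) into
$$p^e h_e(x_1, \ldots, x_n) := \sum_{K : t(K) \geq e} \binom{p^e}{K}\, x_1^{k_1} \cdots x_n^{k_n},$$
a polynomial in $x_1, \ldots, x_n$ of total degree $p^e$. Assembling all classes yields the stated identity, with the required constraint on each $h_t$.
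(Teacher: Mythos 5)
Your proof is correct. It follows the same overall strategy as the paper's -- expand by the multinomial theorem, peel off the pure powers, and group the remaining terms into the $h_t$'s -- but the two arguments are dual in how they organize the grouping and in what they take as the key arithmetic input. The paper partitions the non-extreme terms by the exact power of $p$ dividing the exponent tuple (the condition $p^{e-t}\,\|\,(a_1,\ldots,a_n)$), which immediately gives each block the shape $\tau_t(x_1^{p^{e-t}},\ldots,x_n^{p^{e-t}})$, and then cites Singmaster for the coefficient divisibility $p^t \mid \binom{p^e}{a_1,\ldots,a_n}$. You instead partition by the valuation $t(K)$ of the multinomial coefficient, which makes the factor $p^t$ immediate, and then prove the exponent divisibility $p^{e-t(K)} \mid k_i$ as your main claim. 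The underlying inequality is the same in both cases -- namely $v_p\binom{p^e}{k_1,\ldots,k_n} \geq e - \min_i v_p(k_i)$ -- but your carry-counting argument via Kummer's theorem makes the proof self-contained where the paper relies on an external reference, which is a genuine gain in completeness. One cosmetic remark: your $h_t$ is homogeneous of degree exactly $p^t$ only when the class $\{K : t(K) = t\}$ is nonempty (otherwise $h_t = 0$); the same caveat applies to the paper's $\sigma_t$, and since the lemma is only ever invoked downstream through the bound $\deg F_t \leq dp^t$, this is harmless.
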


\begin{proof} We use the multinomial theorem on the left hand side of the above equation.

\begin{equation} \label{basey} (x_1 + \cdots + x_n)^{p^e} = x_1^{p^e} + \cdots + x_n^{p^e} + \sum_{\substack{a_1 + \cdots + a_n =p^e \\ a_1 \neq p^e,..., a_n \neq p^e}} \binom{p^e}{a_1,...,a_n} x_1^{a_1}...x_n^{a_n}. \end{equation}

For simplicity of notation, let \begin{equation*} A = \binom{p^e}{a_1,...,a_n} x_1^{a_1}...x_n^{a_n}. \end{equation*}

Then the sum in (\ref{basey}) can be split as follows: \begin{equation*} \begin{split}
\sum_{a_1 + \cdots + a_n =p^e} A = x_1^{p^{e}} + \cdots + x_n^{p^{e}} & + \sum_{\substack{a_1 + \cdots + a_n =p^e \\ p^{e-1} || (a_1,...,a_n)}} A + \sum_{\substack{a_1 + \cdots + a_n =p^e \\ p^{e-2} || (a_1,...,a_n)}} A
\\ + \cdots & + \sum_{\substack{a_1 + \cdots + a_n =p^e \\ p || (a_1,...,a_n)}} A + \sum_{\substack{a_1 + \cdots + a_n =p^e \\  p \, \nmid a_{\epsilon} \, \textnormal{for some } {\epsilon}}} A. 
\end{split}
\end{equation*}
Now, let  \begin{equation*} \sigma_t = \sum_{\substack{a_1 + \cdots + a_n =p^e \\ p^{e-t} || (a_1,...,a_n)}} A, \hspace{.5pc} 1 \leq t \leq e. \end{equation*} If we can show for $1 \leq t \leq e$ that $\sigma_t$ has the form $p^t h_t(x_1^{p^{e-t}},...,x_n^{p^{e-t}})$ with deg $h_t(x_1,...,x_n) = p^t$, then the proof is done.

\vspace{1pc}
Notice that the summand $A$ always has degree $a_1 + \cdots + a_n = p^e$, which means deg $\sigma_t = p^e$. Since $p^{e-t} | a_{\epsilon}$ for all ${\epsilon}$ between 1 and $n$, we know that $\sigma_t$ has the form $\tau_t(x_1^{p^{e-t}},...,x_n^{p^{e-t}}) \in R[x_1,...,x_n]$ and deg $\tau_t(x_1,...,x_n) = \frac{p^e}{p^{e-t}} = p^t$.

\vspace{1pc}
The fact that $p^t | \binom{p^e}{a_1,...,a_n}$ under the conditions that $p^{e-t} || (a_1,...,a_n)$ has an elegant proof by Singmaster in \cite{Singmaster}. Therefore, we have that $p^t | \tau_t(x_1^{p^{e-t}},...,x_n^{p^{e-t}})$. This tells us $\sigma_t$ has the form $p^t h_t(x_1^{p^{e-t}},...,x_n^{p^{e-t}})$ with deg $h_t(x_1,...,x_n) = p^t$, and thus the lemma is proved.

\end{proof}

Let $f$ be a polynomial map over $\mathbb{F}_q^n,$ char $\mathbb{F}_q = p.$ Also let $e_1, ..., e_n$ be a basis of the field $\mathbb{F}_{q^n}$ over $\mathbb{F}_q$, and let $x = x_1e_1 + \cdots + x_ne_n$ as before in Section \ref{From2}, allowing for the identification of a polynomial map $f(x_1,..,x_n) = ((f_1(x_1,...,x_n),...,(f_n(x_1,...,x_n))$ with the multivariate polynomial $f(x_1,...,x_n) = f_1(x_1,...,x_n)e_1 + \cdots + f_n(x_1,...,x_n)e_n$ or the univariate polynomial $f(x)$. Also let $S_k(f)$ and $U(f)$ be as in Section \ref{SingleVar}. To improve upon the $p$-adic lifting method, we will apply Lemma \ref{ppowered} to $f(x_1,...,x_n)^k$, split $S_k(f)$ amongst these polynomials, and then split the summand polynomials further into monomials.

\vspace{1pc}

Write $k = p^ek_1$ with $p \nmid k_1.$ For simplicity of notation, assume $f$ has already been lifted with coefficients in $L_{q^n}$. Then by Lemma \ref{ppowered}, there exists polynomials $F_0,..., F_e \in \mathbb{F}_{q^n}[x_1,...,x_n]$ such that \begin{align*} \left(f_1(x_1,...,x_n)e_1 + \cdots + f_n(x_1,...,x_n)e_n\right)^{p^e} & = F_0(x_1^{p^e},...,x_n^{p^e}) + pF_1(x_1^{p^{e-1}},...,x_n^{p^{e-1}}) + \cdots \\ & + p^eF_e(x_1,...,x_n), \end{align*} where deg $F_t(x_1, ..., x_n) \leq dp^t.$ This means that 

\begin{align} \left(f_1(x_1,...,x_n)e_1 + \cdots + f_n(x_1,...,x_n)\right)^{k} & = \left(F_0(x_1^{p^e},...,x_n^{p^e}) + pF_1(x_1^{p^{e-1}},...,x_n^{p^{e-1}}) \right. + \cdots \notag \\ & + p^eF_e(x_1,...,x_n) \Bigr)^{k_1} \notag \\ = \sum_{b_0 + \cdots + b_e =k_1} \binom{k_1}{b_0,...,b_e} p^{b_1 + 2b_2 + \cdots + eb_e} &F_0(x_1^{p^e},...,x_n^{p^e})^{b_0} \cdots F_e(x_1,...,x_n)^{b_e}. \notag \end{align}

\vspace{1pc}

Now for fixed $b_0,..., b_e,$ let $\lambda$ be the positive integer such that $b_{\lambda} \neq 0, b_{\lambda + 1} = \cdots = b_e = 0,$ and let $y_i = x_i^{p^{e-\lambda}}$. This means we can reduce the power and degree of our summand polynomials in the following way: \begin{equation} \label{lambdasub} F_0(x_1^{p^e},...,x_n^{p^e})^{b_0} \cdots F_{\lambda}(x_1^{p^{e-\lambda}},...,x_n^{p^{e-\lambda}})^{b_{\lambda}} = F_0(y_1^{p^{\lambda}},...,y_n^{p^{\lambda}})^{b_0} \cdots F_{\lambda}(y_1,...,y_n)^{b_{\lambda}}. \end{equation} Note that each term may have a different substitution, but we may split $S_k(f)$ amongst each summand to bound the $p$-divisibility of the entire sum. Using the reduction of $f(x_1,..,x_n)^k$ to \eqref{lambdasub}, we are given sums of the form \begin{equation} \label{lambdareducedsum} p^{b_1 + 2b_2 + \cdots + \lambda b_{\lambda}}\sum_{y_1,...,y_n \in L_q}F_0(y_1^{p^{\lambda}},...,y_n^{p^{\lambda}})^{b_0} \cdots F_{\lambda}(y_1,...,y_n)^{b_{\lambda}}. \end{equation} Now the fact that $b_{\lambda} \neq 0$ tells us this sum is divisible by $p^{\lambda}$, i.e. $S_k(f) \equiv 0 \, (\textnormal{mod } p^{\lambda})$. From here, we must further split this summand product into monomials and determine the $p$-divisibility of the smaller sums. Let \begin{equation*} F_0(y_1^{p^{\lambda}},...,y_n^{p^{\lambda}})^{b_0} \cdots F_{\lambda}(y_1,...,y_n)^{b_{\lambda}} = \sum_{j=1}^m c_jY^{W_j}, \hspace{1 pc} c_j \in \mathbb{F}_{q^n}^{*}, \end{equation*} where \begin{equation} \label{Mononota} W_j = (w_{1j},...,w_{nj}) \in \mathbb{Z}_{\geq 0}^n, \hspace{1 pc} Y^{W_j} = y_1^{w_{1j}}...y_n^{w_{nj}}. \end{equation} This allows the sum in \eqref{lambdareducedsum}, and ultimately $S_k(f)$, to be split among the monomials in \eqref{Mononota} into sums of the form \begin{equation} \label{Monomproduct} p^{b_1 + 2b_2 + \cdots + \lambda b_{\lambda}}\sum_{y_1,...,y_n \in L_q}c_j Y^{W_j} = c_j p^{b_1 + 2b_2 + \cdots + \lambda b_{\lambda}} \prod_{i = 1}^n \sum_{y_i \in L_q} y_1^{w_{ij}}. \end{equation} 

\vspace{1pc}

Let $C$ be an upper bound on $k$, i.e. $ k < C$  ($C$ will depend on which of the theorems in the following sections we are proving). Our goal is to show $C \leq U(f)$ and therefore come up with a nicer bound on $|V_f|$ (thanks to Lemma \ref{Ulemma}). Let $v_p$ be the $p$-adic valuation with $v_p(p) = 1$, and let $\ell_j$ be the number of nonzero entries of $W_j$. We can accomplish our goal by showing the sum in \eqref{Monomproduct} is congruent to zero mod $p^{\lambda}q^{n- \ell_j},$ and that $v_p(p^{\lambda}q^{n- \ell_j}) \geq v_p(pk)$, i.e. $$ \lambda + (n- \ell_j)v_p(q) \geq e + 1.$$ If this holds true for all monomials, then $S_k(f) \equiv 0$ mod $pk$ and $k \leq U(f)$.

\vspace{1pc}

Now the sum in \eqref{Monomproduct} equals zero if one of the $w_{ij}$'s is not divisible by $q-1$, so all that is left is to consider the case when $q-1 | w_{ij}$ for all $i$.  In this case, since $b_{\lambda} \neq 0$, and since $\ell_j$ is the number of nonzero $w_{ij}$, we have $n - \ell_j$ zero terms, which tells us $$p^{b_1 + 2b_2 + \cdots + \lambda b_{\lambda}}\sum_{y_1,...,y_n \in L_q}c_j Y^{W_j} \equiv 0 \, (\textnormal{mod } p^{\lambda}q^{n - \ell_j}). $$

\vspace{1pc}

The above substitution method allows us to refine the recently published results mentioned in Section \ref{recent}, whose proofs simply used the monomials of $f(x_1,...,x_n)^k$ directly. These proofs required that $k < q$ to bound the value set, but our proofs do not. The next few sections will show how the added structure our method provides tighter upper bounds on the cardinality of the value set.

\section{Alternate Degree Bound Proof}

\begin{theorem} \label{Kostersdegree} Let $f$ be a polynomial map with $f: \mathbb{F}_q^n \longrightarrow \mathbb{F}_q^n,$ char $\mathbb{F}_q = p,$ $$f(x_1,...,x_n) = f_1(x_1,...,x_n)e_1 + \cdots + f_n(x_1,...,x_n),$$ and $d$ = max$_i$ deg $f_i$. If $|V_f| < q^n$, then $$V_f \leq q - \frac{n(q-1)}{d}.$$
\end{theorem}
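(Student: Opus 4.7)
The plan is to mirror the proof of Theorem \ref{mubound}. View $f$ as the univariate polynomial $f(x) = \sum_i f_i(x_1,\ldots,x_n)e_i \in \mathbb{F}_{q^n}[x]$ with $x = \sum_i x_i e_i$ as in Section \ref{From2}, and apply Lemma \ref{Ulemma}: it suffices to prove $U(f) \geq n(q-1)/d$. That is, for every positive integer $k < n(q-1)/d$ I must show $S_k(f) \equiv 0 \pmod{pk}$. Writing $k = p^e k_1$ with $p \nmid k_1$, and using that $k_1$ is a unit in $\mathbb{Z}_{q^n}$, this reduces to showing $v_p(S_k(f)) \geq e+1$.

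For the valuation estimate I would deploy the substitution method of Section \ref{newmethod}: apply Lemma \ref{ppowered} to expand $f^{p^e}$, raise to the $k_1$ power by the multinomial theorem, and within each multinomial term indexed by $(b_0,\ldots,b_e)$ perform the change of variables $y_i = x_i^{p^{e-\lambda}}$ where $\lambda = \max\{t : b_t \neq 0\}$. Expanding the resulting product $F_0(y^{p^\lambda})^{b_0}\cdots F_\lambda(y)^{b_\lambda}$ into $y$-monomials $c_j Y^{W_j}$ and applying the character sum \eqref{charsum}, only monomials with $(q-1)\mid w_{ij}$ for every $i$ survive, and each such monomial contributes a summand with $p$-adic valuation at least $\lambda + (n - \ell_j)s$, where $s \defeq v_p(q)$ and $\ell_j$ is the number of nonzero entries of $W_j$. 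Setting $t \defeq e - \lambda$, the target becomes $(n - \ell_j)s \geq t+1$ for every such monomial.

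The key new input is a degree bound: since the product above has $y$-degree at most $dp^\lambda k_1 = dk/p^t$, any surviving monomial satisfies $\ell_j(q-1) \leq dk/p^t$, and the standing hypothesis $k < n(q-1)/d$ upgrades this to the integer inequality $\ell_j p^t \leq n-1$. I would then split into two cases. If $\ell_j \geq 1$, this forces $n - \ell_j \geq \ell_j(p^t - 1) + 1 \geq p^t$, and the elementary fact $p^t \geq 2^t \geq t+1$ closes the case. If $\ell_j = 0$, the $y$-monomial is a constant, so the inner character sum contributes a full factor of $q^n$, and the total valuation is at least $\lambda + ns$; here one verifies $ns \geq e+1$ directly from $p^e < np^s$ (which follows from $k < n(q-1)/d \leq nq$) together with the elementary inequality $s(n-1) \geq \log_p(n-1) + 1$ valid for all $n \geq 2$, $p \geq 2$, $s \geq 1$.

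I expect the $\ell_j = 0$ case to be the most delicate point, since there the substitution machinery produces no $p$-adic gain from a constant monomial and the required valuation has to come entirely from the $q^n$ factor, controlled only by the comparatively weak bound on $e$ afforded by $k < n(q-1)/d$. The remaining bookkeeping — verifying $p^{b_1 + 2b_2 + \cdots + \lambda b_\lambda} \geq p^\lambda$, noting that $v_p(q-1) = 0$ so the $(q-1)^{\ell_j}$ factor is harmless, and observing that the multinomial coefficients carry non-negative $p$-adic valuation — is already packaged into the framework of Section \ref{newmethod} and should be routine.
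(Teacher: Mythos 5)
Your proposal is correct, and it runs on exactly the same engine as the paper's proof: reduce to $U(f)\geq n(q-1)/d$ via Lemma \ref{Ulemma}, apply the substitution framework of Section \ref{newmethod} (Lemma \ref{ppowered}, the multinomial expansion, $y_i = x_i^{p^{e-\lambda}}$), and bound each surviving monomial's contribution by $\lambda + (n-\ell_j)v_p(q)$ using the degree bound $\ell_j(q-1)\leq dp^{\lambda}k_1$. Where you diverge is the final inequality chase. The paper reduces to the Mullen--Wan--Wang inequality $\bigl(n - \lfloor dk_1p^{e}/(q-1)\rfloor\bigr)v_p(q)\geq e+1$ for the cases $n\leq d$ or $k<q$, and then handles the remaining case $n>d$, $k\geq q$ by splitting on $n\leq p^r$ versus $n>p^r$, with further subcases for $r=0$ and $r=1,\ p=2$. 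You instead upgrade $\ell_j p^t < n$ to the integer inequality $\ell_j p^t \leq n-1$, which for $\ell_j\geq 1$ gives $n-\ell_j \geq \ell_j(p^t-1)+1 \geq p^t \geq t+1$ in one line; this is cleaner, self-contained (no appeal to \cite{MWWValue}), and eliminates the entire case analysis. The price is that your inequality degenerates when $\ell_j = 0$, forcing the separate verification $ns \geq e+1$ from $p^e \leq k < nq$ --- which is sound as you argue it, and is in fact the same estimate the paper uses in its ``$n\leq p^r$'' branch (there via $k < q^n$), where the uniform floor-function bound $\ell_j \leq \lfloor dk_1p^{\lambda}/(q-1)\rfloor$ absorbs the $\ell_j=0$ case automatically. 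Both arguments are complete; yours is the more economical of the two.
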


Note that this theorem was proven by Kosters in \cite{Kosters}, but we provide an alternate proof using the method outlined in Section \ref{newmethod}.

\begin{proof} If we can show that, for $1 \leq k < \frac{n(q-1)}{d}$ and $k = p^ek_1$ with $p \nmid k_1$,  \begin{equation*} S_k(f) \defeq \sum_{x \in L_{q^n}} \tilde{f}(x)^k  = \sum_{x_1,...,x_n \in L_{q}} \left(\tilde{f}_1(x_1,...,x_n)\tilde{e}_1 + \cdots + \tilde{f}_n(x_1,...,x_n)\tilde{e}_n\right)^k \equiv 0 \, (\textnormal{mod } pk), \end{equation*} then $U(f) \geq \frac{n(q-1)}{d}$ and we are done by Lemma \ref{Ulemma}.

\vspace{1pc}

For simplicity of notation, assume $f$ is already lifted to characteristic zero over $L_{q^n}$. Split $S_k(f)$ into sums of the form \eqref{lambdareducedsum}. Notice that, by our substitution and Lemma \ref{ppowered}, the degree of the summand $F_0(y_1^{p^{\lambda}},...,y_n^{p^{\lambda}})^{b_0} \cdots F_{\lambda}(y_1,...,y_n)^{b_{\lambda}}$ in $(y_1,...,y_n)$ is bounded above by $dp^{\lambda}a_0 + dp^1p^{\lambda - 1} + \cdots + dp^{\lambda}a_{\lambda} = dp^{\lambda}k_1.$ When we further split these sums into sums of the form \eqref{Monomproduct}, we have that \begin{equation*} p^{b_1 + 2b_2 + \cdots + \lambda b_{\lambda}}\sum_{y_1,...,y_n \in L_q}c_j Y^{W_j} \equiv 0 \, (\textnormal{mod } p^{\lambda}q^{n - \ell_j}). \end{equation*} Since this sum equals 0 if one of the $w_{ij}$'s is not divisible by $q-1$, assume $q-1 | w_{ij}$ for all $i$. Using our degree bound we have that \begin{equation*} (q-1) \ell_j \leq w_{1j} + \cdots w_{nj} \leq dp^{\lambda}k_1, \end{equation*} or $\ell_j \leq \left\lfloor \frac{dp^{\lambda}k_1}{q-1} \right\rfloor$. If we can show that \begin{equation*} v_p\left(p^\lambda q^{n - \left\lfloor \frac{dk_1p^{\lambda}}{q-1}\right\rfloor}\right) \geq v_p(pk), \end{equation*} Then we are done. In other words, we must show \begin{equation*} \lambda + \left(n - \left\lfloor{\frac{dk_1p^{\lambda}}{q-1}}\right\rfloor \right)v_p(q) \geq e + 1. \end{equation*}

\vspace{1pc}

Mullen, Wan, and Wang \cite{MWWValue} proved a similar inequality, \begin{equation*}\left(n - \left\lfloor{\frac{dk_1p^e}{q-1}}\right\rfloor \right)v_p(q) \geq e + 1. \end{equation*} However, their proof only holds in the cases when: \begin{enumerate}
\item $n \leq d$
\item $n > d$ and $k < q$.
\end{enumerate}
Since their inequality implies ours, we may assume that $n > d$ and $k \geq q$. Let $r = e - \lambda$. Then it suffices to show that \begin{equation*} \left(n - \left\lfloor{\frac{dk}{p^r(q-1)}}\right\rfloor \right)v_p(q) \geq r + 1, \hspace{.5pc} 0 \leq r \leq e. \end{equation*} Note that $k < \frac{n(q-1)}{d} \leq q^n$ implies that $\frac{dk}{q-1} < n$, which is equivalent to $\frac{dk}{p^r(q-1)} < \frac{n}{p^r}.$ 
\vspace{1pc}

In the case that $n \leq p^r$, we have that $\frac{dk}{p^r(q-1)} < 1$, which implies \begin{equation*} \left(n - \left\lfloor{\frac{dk}{p^r(q-1)}}\right\rfloor \right)v_p(q) = nv_p(q) > v_p(k) = e \geq r. \end{equation*} In other words, \begin{equation*} \left(n - \left\lfloor{\frac{dk}{p^r(q-1)}}\right\rfloor \right)v_p(q) \geq r + 1. \end{equation*} Now let us examine the case when $n > p^r$. For $r = 0$, \begin{equation*} \left(n - \left\lfloor{\frac{dk}{(q-1)}}\right\rfloor \right)v_p(q) \geq v_p(q) \geq 1. \end{equation*} For $r = 1$ and $p = 2$ (implying $n > 2$), \begin{equation*} \left(n - \left\lfloor{\frac{dk}{2(q-1)}}\right\rfloor \right)v_p(q) \geq \left(n - \left\lfloor{\frac{n}{2}}\right\rfloor \right)v_p(q) = \left\lceil{\frac{n}{2}}\right\rceil v_p(q) \geq 2v_p(q) \geq 2. \end{equation*} And finally, when $r \geq 1$, \begin{equation*} \begin{split}\left(n - \left\lfloor{\frac{dk}{p^r(q-1)}}\right\rfloor \right)v_p(q) & \geq \left(n - \left\lfloor{\frac{n}{p^r}}\right\rfloor \right)v_p(q) \geq \left(\frac{n(p^r-1)}{p^r}\right)v_p(q) \\ & \geq \left(\frac{(p^r + 1)(p^r-1)}{p^r}\right)v_p(q) = \left(p^r - \frac{1}{p^r}\right)v_p(q). \end{split}\end{equation*} Note that $p^r - \frac{1}{p^r} \geq r + 1$ for all $r \geq 1$ except for when $r = 1$ and $p = 2$ simultaneously.

\end{proof}

\section{Improved Newton Polytope Bound}

\begin{theorem} \label{mubound2} Let $f(x_1,...,x_n)=(f_1(x_1,...,x_n),...,f_n(x_1,...,x_n))$ be a polynomial vector over the vector space $\mathbb{F}_q^n$. Without loss of generality, suppose $f$ is not polynomial in some subset of $\{x_1,...,x_n\}$. Let $\Delta(f)$ be the Newton polytope of $f$, and let $\mu_f$ be the minimal dilation factor associated with $\Delta(f)$. If $|V_f| < q^n$, then \begin{align*}|V_f| \leq q^n - \mu_f(q-1).\end{align*}
\end{theorem}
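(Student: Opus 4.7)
The plan is to establish $U(g) \geq \mu_f(q-1)$ using the framework set up in Section \ref{newmethod}. Via the basis identification of Section \ref{From2}, I would view $f$ as a univariate polynomial $g$ over $\mathbb{F}_{q^n}$; by Lemma \ref{Ulemma} it then suffices to prove $U(g) \geq \mu_f(q-1)$. So fix $k$ with $1 \leq k < \mu_f(q-1)$, write $k = p^e k_1$ with $p \nmid k_1$, and aim to show $S_k(g) \equiv 0 \pmod{pk}$.

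Next I would apply Lemma \ref{ppowered} to $g^{p^e}$ and raise the result to the $k_1$-th power, decomposing $g^k$ as a multinomial sum indexed by tuples $(b_0,\ldots,b_e)$ with $\sum b_t = k_1$. For each term, let $\lambda$ be the largest index with $b_\lambda \neq 0$, substitute $y_i = x_i^{p^{e-\lambda}}$ to pass to the form \eqref{lambdareducedsum}, and split the inner sum into monomials $c_j Y^{W_j}$ as in \eqref{Mononota}. Tracking exponents through the substitution shows that $W_j \in p^\lambda k_1 \Delta(f)$, and the character-sum identity \eqref{charsum} makes $\sum_{y \in L_q^n} Y^{W_j}$ nonzero only when $(q-1) \mid w_{ij}$ for every $i$; in that case its $p$-adic valuation is at least $\lambda + (n-\ell_j)\,v_p(q)$, where $\ell_j := |\{i : w_{ij} > 0\}|$. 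The whole argument therefore reduces to verifying
\[
\lambda + (n-\ell_j)\,v_p(q) \;\geq\; e+1, \qquad\text{i.e., } (n-\ell_j)\,v_p(q) \;\geq\; r+1,
\]
where $r := e - \lambda$.

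The polytope hypothesis immediately rules out $\ell_j = n$: if $W_j$ had all coordinates positive, then $W_j/(q-1)$ would be a strictly positive lattice point of $(p^\lambda k_1/(q-1))\Delta(f)$, forcing $p^\lambda k_1/(q-1) \geq \mu_f$, whereas $p^\lambda k_1 = k/p^r \leq k < \mu_f(q-1)$, a contradiction. Thus $\ell_j \leq n-1$, which already settles the inequality when $r = 0$. The main obstacle is closing the remaining gap when $(n-\ell_j)\,v_p(q) < r+1$, which can occur when $v_p(q) = 1$ and $r \geq 1$. To handle this I would run the same dichotomy on $r$ versus $p^r$ and $n$ used in Kosters' proof of Theorem \ref{Kostersdegree}, but with the degree bound $\ell_j \leq \lfloor d\,p^\lambda k_1/(q-1)\rfloor$ replaced by its polytope refinement: if the positive support of $W_j$ is $S$ with $|S| = \ell_j$, then $W_j$ actually lies in the face $\Delta(f) \cap \{x_i = 0 : i \notin S\}$, which forces $p^\lambda k_1/(q-1)$ to exceed the minimal dilation factor of that face. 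Combining these face-level inequalities with the global hypothesis $k < \mu_f(q-1)$ should force the number of zero coordinates $n - \ell_j$ to grow at the required rate as $r$ increases. The hardest step will be executing this polytope-face dichotomy uniformly in $(r, v_p(q))$; it is precisely at this step that the refined argument removes the $\min\{q,\mu_f(q-1)\}$ appearing in Theorem \ref{mubound}.
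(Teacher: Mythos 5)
Your setup is correct and matches the paper through the reduction to $U(g)\geq\mu_f(q-1)$, the application of Lemma \ref{ppowered}, the substitution $y_i=x_i^{p^{e-\lambda}}$, the containment $W_j\in \frac{k}{p^{e-\lambda}}\Delta(f)$, and the observation that ruling out $\ell_j=n$ settles the case $r=0$. But the step you yourself flag as ``the hardest'' is exactly where the proof lives, and your proposed route does not close it. Excluding full support only gives $n-\ell_j\geq 1$, and for $r\geq 1$ with $v_p(q)=1$ you need $(n-\ell_j)v_p(q)\geq r+1$. Your plan to extract more from a single monomial via the minimal dilation factor of the face $\Delta(f)\cap\{x_i=0: i\notin S\}$ cannot work: a single $W_j$ supported on a proper coordinate subset carries no contradiction with $k<\mu_f(q-1)$, since $\mu_f$ only constrains lattice points with \emph{all} coordinates positive, and the face's own dilation factor is unrelated to $\mu_f$ of the full polytope. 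Likewise, the Kosters-style dichotomy in Theorem \ref{Kostersdegree} leans on the global degree inequality $(q-1)\ell_j\leq w_{1j}+\cdots+w_{nj}\leq dp^{\lambda}k_1$, for which there is no single-monomial polytope analogue.

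The missing idea in the paper is collective rather than face-by-face: define $\gamma$ to be the size of the smallest subset of $\{W_1,\ldots,W_m\}$ whose \emph{sum} lies in $\mathbb{N}^n$ (Definition \ref{gammaray}). Since each $W_j$ lies in $\frac{k}{p^{e-\lambda}}\Delta(f)\cap(q-1)\mathbb{Z}_{\geq 0}^n$, a sum of $\gamma$ of them lies in $\frac{\gamma k}{p^{e-\lambda}}\Delta(f)\cap(q-1)\mathbb{N}^n$, whence $\mu_f(q-1)\leq \frac{\gamma k}{p^{e-\lambda}}$; combined with $k<\mu_f(q-1)$ this yields $p^{e-\lambda}<\gamma$. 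A separate combinatorial lemma (Lemma \ref{gammabound}) shows $\gamma-1\leq n-\ell_j$ for every $j$, by starting from the $W_u$ with the most nonzero entries and filling in its at most $n-\ell_u$ zero coordinates one vector at a time (this is where the hypothesis that $f$ is not polynomial in a proper subset of the variables is used). Chaining these gives $p^{r}\leq n-\ell_j$, so $(n-\ell_j)v_p(q)\geq p^{r}v_p(q)\geq r+1$ for all $p$ and all $r\geq 0$, with no case analysis on $v_p(q)$ or on $n$ versus $p^r$. Without the quantity $\gamma$ and its two inequalities, your argument does not reach the required divisibility for $r\geq 1$.
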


\begin{proof}  If we can show that, for $1 \leq k < \mu_f(q-1)$ and $k = p^ek_1$ with $p \nmid k_1$, \begin{equation*} S_k(f) \defeq \sum_{x \in L_{q^n}} \tilde{f}(x)^k  = \sum_{x_1,...,x_n \in L_{q}} \left(\tilde{f}_1(x_1,...,x_n)\tilde{e}_1 + \cdots + \tilde{f}_n(x_1,...,x_n)\tilde{e}_n\right)^k \equiv 0 \, (\textnormal{mod } pk), \end{equation*} then $U(f) \geq \mu_f(q-1)$ and we are done by Lemma \ref{Ulemma}.

\vspace{1pc}

For simplicity of notation, assume $f$ is already lifted to characteristic zero over $L_{q^n}$. Split $S_k(f)$ into sums of the form \eqref{lambdareducedsum}. Notice that, by our substitution and Lemma \ref{ppowered}, the exponent vectors of the monomials of the product $F_0(y_1^{p^{\lambda}},...,y_n^{p^{\lambda}})^{b_0} \cdots F_{\lambda}(y_1,...,y_n)^{b_{\lambda}}$ are contained in $\frac{k}{p^{e-\lambda}}\Delta(f)$. When we further split these sums into sums of the form \eqref{Monomproduct}, we have that \begin{equation*} p^{b_1 + 2b_2 + \cdots + \lambda b_{\lambda}}\sum_{y_1,...,y_n \in L_q}c_j Y^{W_j} \equiv 0 \, (\textnormal{mod } p^{\lambda}q^{n - \ell_j}). \end{equation*} Since this sum equals 0 if one of the $w_{ij}$'s is not divisible by $q-1$, assume $q-1 | w_{ij}$ for all $i$. By this assumption, we have that \begin{equation} \label{inintersection} W_j \in \frac{k}{p^{e-\lambda}}\Delta(f) \cap (q-1)\mathbb{Z}_{\geq 0}^n. \end{equation}

\vspace{1pc}

To further develop this proof, we require additional terminology. \begin{definition}[The quantity $\gamma$] \label{gammaray} \begin{equation*} \gamma \defeq \textrm{min}\left\{ |S| \left| S \subseteq \{W_1,..., W_m\}, \sum_{W_j \in S} W_j \in \mathbb{N}^n \right.\right\}. \end{equation*} \end{definition} In other words, $\gamma$ is the size of smallest subset of the exponent vectors, $\{W_1,..., W_m\}$, such that the sum of its elements lie in $\mathbb{N}^n$. Since $f(x_1,...,x_n)$ is not polynomial in some proper subset of $\{x_1,...,x_n\}$, we have that the polynomials $F_0(y_1^{p^{\lambda}},...,y_n^{p^{\lambda}}), ..., F_{\lambda}(y_1,...,y_n)$ are not either. This means $\gamma$ will exist. Also, assume without loss of generality that $W_1,...,W_{\gamma}$ satisfy the sum property of $\gamma$, i.e. \begin{equation*} W_1 + \cdots + W_{\gamma} \in \mathbb{N}^n. \end{equation*} Using this and \eqref{inintersection}, we have that \begin{equation} \label{suminintersection} W_1 + \cdots + W_{\gamma} \in \frac{\gamma k}{p^{e-\lambda}}\Delta(f) \cap (q-1)\mathbb{N}^n, \end{equation} which means that $\mu_f \leq \frac{\gamma k}{p^{e-\lambda}(q-1)}.$ Reorganizing this, and using our assumption on $k$ at the beginning of the proof, we have $\frac{p^{e-\lambda}}{\gamma}\mu_f(q-1) \leq k < \mu_f(q-1)$, or $p^{e-\lambda} < \gamma$. To make use of this inequality, we have the following lemma:

\begin{lemma} \label{gammabound} For all integers $1 \leq j \leq m$, we have 
\begin{equation*} \gamma -1 \leq n - \ell_j. \end{equation*}
\end{lemma}

\begin{proof} Let $W_u$ be such that $\ell_u \geq \ell_j$ for all $1 \leq j \leq m$. If $\ell_u = n$, then $\gamma = 1$ and we are done. If not, $W_u$ has $n - \ell_u$ components which are zero and we can pick elements from $\{W_1,...,W_{u-1},W_{u+1},...,W_m\}$ to add to $W_u$ until the resulting sum is an element of $\mathbb{N}^n$. This implies it is possible to pick $n - \ell_u + 1$ vectors from $\{W_1,...,W_m\}$ whose sum will lie in $\mathbb{N}^n$. By the definition of $\gamma$, we must have $\gamma \leq n - \ell_u + 1$. But by our assumption on $W_u$, this means that $\gamma - 1 \leq n - \ell_j$ for all $j$. \end{proof}

\vspace{1pc}

With the help of Lemma \ref{gammabound} and \eqref{suminintersection}, we have that $p^{e-\lambda} \leq \gamma - 1 \leq n - \ell_j$. If we can show that \begin{equation*} v_p\left(p^\lambda q^{p^{e-\lambda}}\right) \geq v_p(pk), \end{equation*} Then $S_k(f) \equiv 0$ mod $(pk)$ and we are done. In other words, if $r = e - \lambda$ we must show \begin{equation}\label{pr} p^{r}v_p(q) \geq r + 1. \end{equation} Fortunately, this is true for all primes $p$ and all positive integers $r$.

\end{proof}

\section{Improved Integral Dilation Bound}

\begin{theorem} \label{omegabound} Let $f(x_1,...,x_n)=(f_1(x_1,...,x_n),...,f_n(x_1,...,x_n))$ be a polynomial vector over the vector space $\mathbb{F}_q^n$. Without loss of generality, suppose $f$ is not polynomial in some subset of $\{x_1,...,x_n\}$. Let $\omega_f$ be the integral dilation factor associated with $\Delta(f)$. If $|V_f| < q^n$, then \begin{align*}|V_f| \leq q^n - \omega_f.\end{align*}
\end{theorem}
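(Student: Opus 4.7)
The plan is to mirror the proof of Theorem \ref{mubound2}, replacing the final appeal to the dilation factor $\mu_f$ with one involving the integral dilation factor $\omega_f$. By Lemma \ref{Ulemma} applied to the associated univariate polynomial $g$ from Section \ref{From2}, it suffices to show $U(f) \geq \omega_f$; that is, for every integer $k$ with $1 \leq k < \omega_f$, we must show $S_k(f) \equiv 0 \pmod{pk}$. Writing $k = p^e k_1$ with $p \nmid k_1$, I invoke the machinery of Section \ref{newmethod}: split $S_k(f)$ into sums of the form \eqref{lambdareducedsum}, and further into monomial sums of the form \eqref{Monomproduct}. Each nonvanishing monomial sum (one for which $q-1 \mid w_{ij}$ for all $i$) is $\equiv 0 \pmod{p^\lambda q^{n - \ell_j}}$, so the task reduces to establishing $(n - \ell_j) v_p(q) \geq r + 1$, where $r := e - \lambda$.

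Following the strategy of Theorem \ref{mubound2}, I reuse the invariant $\gamma$ from Definition \ref{gammaray} together with Lemma \ref{gammabound}, which gives $\gamma - 1 \leq n - \ell_j$ for every $j$. After reordering, $W_1 + \cdots + W_\gamma \in \mathbb{N}^n$, and since each $W_j$ lies in $(q-1)\mathbb{Z}^n_{\geq 0}$, the sum lies in $(q-1)\mathbb{N}^n$. Tracking the multinomial combinatorics in Lemma \ref{ppowered} shows that each exponent vector decomposes as $W_j = \sum_l \alpha_l^{(j)} D_l$ with $\alpha_l^{(j)} \in \mathbb{Z}_{\geq 0}$ and $\sum_l \alpha_l^{(j)} = k / p^{e-\lambda}$; hence $W_1 + \cdots + W_\gamma = \sum_l \beta_l D_l$ with $\beta_l \in \mathbb{Z}_{\geq 0}$ and $\sum_l \beta_l = \gamma k / p^{e-\lambda}$.

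The main obstacle is that the $\beta_l$'s are not a priori bounded by $q-1$, so the definition of $\omega_f$ cannot be invoked directly. I plan to bridge this with an auxiliary lemma: the value of $\omega_f$ is unchanged if the coefficient restriction $k_l \in \{0,1,\ldots,q-1\}$ is relaxed to $k_l \in \mathbb{Z}_{\geq 0}$. The argument takes a minimizer of the relaxed problem and shows no $\beta_l$ can exceed $q-1$: replacing $\beta_l$ by $\beta_l - (q-1)$ either keeps the sum in $(q-1)\mathbb{N}^n$ (contradicting minimality) or drives some coordinate $i$ to satisfy $(q-1)v_i \leq (q-1) d_{i,l}$, where the original sum equals $(q-1)v$. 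If $d_{i,l} \geq 1$, then the contribution $\beta_l d_{i,l} \geq q \, d_{i,l} > (q-1) d_{i,l} \geq (q-1)v_i$ exceeds coordinate $i$ of the entire sum, a contradiction; if $d_{i,l} = 0$, then $v_i \leq 0$, contradicting positivity of the original sum.

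With this lemma, $\omega_f \leq \sum_l \beta_l = \gamma k / p^{e-\lambda}$; combined with $k < \omega_f$ this yields $p^{e-\lambda} < \gamma$, and hence $p^r \leq \gamma - 1 \leq n - \ell_j$ by Lemma \ref{gammabound}. Since $p^r v_p(q) \geq r + 1$ for every prime $p$ and every $r \geq 0$, it follows that $(n - \ell_j) v_p(q) \geq p^r v_p(q) \geq r + 1$, which is the divisibility required to finish the proof.
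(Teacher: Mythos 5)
Your proposal is correct and follows the paper's proof skeleton exactly: reduce to $U(f)\geq\omega_f$ via Lemma \ref{Ulemma}, decompose $S_k(f)$ through Lemma \ref{ppowered} into sums of the form \eqref{lambdareducedsum} and \eqref{Monomproduct}, introduce $\gamma$ and Lemma \ref{gammabound}, and close with the inequality $p^rv_p(q)\geq r+1$ of \eqref{pr}. Where you genuinely depart from the paper is at the step $\omega_f\leq\gamma k/p^{e-\lambda}$. The paper deduces this directly from $W_1+\cdots+W_\gamma\in\frac{\gamma k}{p^{e-\lambda}}\Delta(f)\cap(q-1)\mathbb{N}^n$, but the definition of $\omega_f$ requires an \emph{integral} combination of the exponent vectors $D_l$ with coefficients capped at $q-1$; neither condition follows from membership in the dilated polytope alone, which only yields the weaker conclusion $\mu_f(q-1)\leq\gamma k/p^{e-\lambda}$ of Theorem \ref{mubound2}. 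Your two supplements close this gap correctly. First, tracking the multinomial combinatorics does give $W_j=\sum_l\alpha_l^{(j)}D_l$ with $\alpha_l^{(j)}\in\mathbb{Z}_{\geq0}$ and $\sum_l\alpha_l^{(j)}=k/p^{e-\lambda}$: the divisibility conditions on the multinomial indices in Lemma \ref{ppowered} are exactly what keeps these coefficients integral after the substitution $y_i=x_i^{p^{e-\lambda}}$. Second, your relaxation lemma --- that dropping the bound $k_l\leq q-1$ in the definition of $\omega_f$ does not change the minimum, since a minimizer with some $\beta_l\geq q$ yields a contradiction either with minimality or coordinatewise --- is valid and is precisely what licenses the conclusion $\omega_f\leq\sum_l\beta_l=\gamma k/p^{e-\lambda}$. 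So your argument is sound and, at this one step, more complete than the proof given in the paper.
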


\begin{proof} If we can show that, for $1 \leq k < \omega_f$ and $k = p^ek_1$ with $p \nmid k_1$,  \begin{equation*} S_k(f) \defeq \sum_{x \in L_{q^n}} \tilde{f}(x)^k  = \sum_{x_1,...,x_n \in L_{q}} \left(\tilde{f}_1(x_1,...,x_n)\tilde{e}_1 + \cdots + \tilde{f}_n(x_1,...,x_n)\tilde{e}_n\right)^k \equiv 0 \, (\textnormal{mod } pk), \end{equation*} then $U(f) \geq \omega_f$ and we are done by Lemma \ref{Ulemma}.

\vspace{1pc}

For simplicity of notation, assume $f$ is already lifted to characteristic zero over $L_{q^n}$. Split $S_k(f)$ into sums of the form \eqref{lambdareducedsum}. Notice that, by our substitution and Lemma \ref{ppowered}, the exponent vectors of the monomials of the product $F_0(y_1^{p^{\lambda}},...,y_n^{p^{\lambda}})^{b_0} \cdots F_{\lambda}(y_1,...,y_n)^{b_{\lambda}}$ are contained in $\frac{k}{p^{e-\lambda}}\Delta(f)$. When we further split these sums into sums of the form \eqref{Monomproduct}, we have that \begin{equation*} p^{b_1 + 2b_2 + \cdots + \lambda b_{\lambda}}\sum_{y_1,...,y_n \in L_q}c_j Y^{W_j} \equiv 0 \, (\textnormal{mod } p^{\lambda}q^{n - \ell_j}). \end{equation*} Since this sum equals 0 if one of the $w_{ij}$'s is not divisible by $q-1$, assume $q-1 | w_{ij}$ for all $i$. By this assumption, we have that \begin{equation*} W_j \in \frac{k}{p^{e-\lambda}}\Delta(f) \cap (q-1)\mathbb{Z}_{\geq 0}^n. \end{equation*}

\vspace{1pc}

Now let $\gamma$ be as in Definition \ref{gammaray} and, WLOG, let $W_1 + \cdots + W_{\gamma} \in \mathbb{N}^n$. Then by the definition of $\gamma$, we have \begin{equation*} W_1 + \cdots + W_{\gamma} \in \frac{\gamma k}{p^{e-\lambda}}\Delta(f) \cap (q-1)\mathbb{N}^n, \end{equation*} and $\omega_f \leq \frac{\gamma k}{p^{e-\lambda}}$. Using this and Lemma \ref{gammabound}, we have $\frac{p^{e-\lambda}}{\gamma}\omega_f \leq k < \omega_f$, or $p^{e-\lambda} \leq \gamma - 1 \leq n - \ell_j$. By this inequality and \eqref{pr}, we are done.

%\begin{definition}[The quantity $\gamma_f$] \label{gammaray2} \begin{equation*} \gamma_f \defeq \textnormal{min}\left\{ \sum_{j=1}^m k_j \left| k_j \in \{0,1\}, \sum_{j=1}^m k_jD_j \in \mathbb{N}^n \right. \right\}. \end{equation*} \end{definition}

%Now to further examine $W_j$, we must compare the monomials of $F_0(y_1^{p^{\lambda}},...,y_n^{p^{\lambda}})^{b_0} \cdots F_{\lambda}(y_1,...,y_n)^{b_{\lambda}}$ to those of $f(x_1,...,x_n)^k$. Write $f(x_1,...,x_n)^k$ in the form \begin{equation*} f(x_1,...,x_n)^k = \sum_{j=1}^m c_jX^{V_j}, \hspace{1 pc} c_j \in \mathbb{F}_{q^n}^{*}, \end{equation*} where \begin{equation} \label{MononotaV} V_j = (v_{1j},...,v_{nj}) \in \mathbb{Z}_{\geq 0}^n, \hspace{1 pc} X^{V_j} = x_1^{v_{1j}}...x_n^{v_{nj}}. \end{equation} Then we have that there exists $z$ such that $W_j = \frac{1}{p^z} V_j$ and the number of nonzero terms of $V_j$ and $W_j$ are the same. Therefore, we may use $\ell_j$ when referring to $V_j$ or $W_j$ interchangeably. 

\end{proof}
\chapter{Conclusion}

\section{Analysis of Cardinality Bounds}

Each of the bounds given in Chapter \ref{refine} are sharp. Let $N(x_1,...,x_{n-1})$ be the field norm of $\mathbb{F}_{q^{n-1}}$ over $\mathbb{F}_q$. Kosters \cite{Kosters} illustrates that Theorem \ref{Kostersdegree} is sharp using the map $f(x_1,..., x_n) = (x_1, x_2,..., N(x_1,...,x_{n-1}) x_n)$. Based on this example, we give the following sharp example for Theorems \ref{mubound2} and \ref{omegabound}. Let $h(x_1, x_2,..., x_n) = (x_1, x_2,..., N(x_1,...,x_{n-1})^a x_n)$ with $a$ in $\mathbb{N}$. Because $N(x_1,...,x_{n-1})$ is a polynomial containing the monomials $x_1^{n-1},...,x_{n-1}^{n-1}$ with nonzero coefficients, we have that $(a,a,...,a,1) \in \Delta(h)$. This explicitly tells us $\Delta(h) \cap \mathbb{N}^n \neq \varnothing$. We also have for all $V = (v_1,...,v_n) \in \Delta(h) \cap \mathbb{N}^n$, $v_n = 1$. This implies $\mu_h = 1$ and $\omega_h = q-1$. In addition, since the preimage $N^{-1}(0) = \{(0,...,0)\}$, we are given $|V_h| = q^n - (q-1)$. This example highlights the flexibility granted by the use of constants derived from the Newton polytope, since deg $h = a(n-1) + 1$ does not allow for a sharp cardinality bound. This flexibility also gives us more freedom to make substitutions when generating more sharp examples. If $z_1(x),...,z_{n-1}(x)$ are univariate permutation polynomials in $\mathbb{F}_q[x]$, then the maps $g(x_1,...,x_n) = \left(z_1(x_1),...,z_{n-1}(x_{n-1}), N(x_1,...,x_{n-1})^a x_n\right)$ and $h(z_1(x_1),...,z_{n-1}(x_{n-1}), x_n)$ will share the same constants and value set cardinality as $h(x_1,..., x_n)$.

\vspace{1pc}

Using the constant $\omega_f$ also has an advantage when determining bounds on univariate value sets. In this case, since $n = 1$, we have that $\mu_f = \frac{1}{\textnormal{deg } f}$ for all $f \in \mathbb{F}_q[x]$, but Zan and Cao \cite{ZanCao} give a sharp example which improves upon this for $\omega_f$. If $f(x) = x^7 + ax \in \mathbb{F}_{19}[x]$ with $a \neq 0, 4, 5, 8, 16, 17$, then it is easy to check that $\omega_f = 6$, $|V_f| = 13 = 19 - \omega_f < 19 - \left\lceil \frac{1}{7}(18) \right\rceil = 16$.

\vspace{1pc}

Note that, in general, it is not immediately clear how large of an improvement the strongest bound in Theorem \ref{omegabound} provides over our bounds in Theorems \ref{Kostersdegree} and \ref{mubound2}. I have addressed in \cite{Mypaper} that an effective method for calculating $\mu_f$ is not directly clear from the definitions given. However, calculation of $\omega_f$ should be much more efficient complexity-wise, since only a finite amount of values need to be checked to determine the minumum value. This quantity of values to check by brute force grows with complexity $O(q^n)$ and is therefore polynomial in $q$ (though exponential in $n$). Therefore, there is much value in the use of $\omega_f$ even when it is equal to $\mu_f \cdot (q-1)$.

\section{Future Work}

It is important to consider whether the results presented in Chapter \ref{refine} apply in more general settings. For instance, there are cases when it is more convenient to use rational interpolated form of a map than its polynomial form, especially when the monomials of the rational interpolation have much smaller degree. Even if we strictly considered Laurent polynomials, where we have $f(x) \in \mathbb{F}_q[x,x^{-1}]$ or the Laurent polynomial map $f(x_1,...,x_n) = (f_1(x_1,...,x_n),...,f_n(x_1,...,x_n))$ with $f_i(x_1,...,x_n) \in \mathbb{F}_q[x_1, x_2,...,x_n, x_1^{-1}, x_2^{-1},...,x_n^{-1}]$, can we apply the geometry of the Newton polytope to bound their cardinalities? Would such bounds be any stronger than those obtained by using a polynomial-interpolated form of the map?\footnote{Thanks to Matt Keti for compiling the following table.}

\begin{table}[h]
  \centering
	\caption{Examples of polynomials and their rational interpolations over $\mathbb{F}_{2^8}^*$. We denote $\alpha$ to be a multiplicative generator of $\mathbb{F}_{2^8}^*$.}
  \label{Interpolate}
\begin{equation*}
\begin{array}{c|c}
\text{Polynomial Interpolation} & \text{Rational Interpolation} \\ \hline \hline
x^{18}+3x^2+1 & \text{N/A} \\ \hline
x^{254} + x^{17} + 1 & (x^{18} + x + 1)/{x} \\ \hline
x^{254} + x^{253} + x^{30} & (x^{32} + x + 1)/{x^2} \\ \hline
(\alpha^6+\alpha^3+1)x^{254}+\ldots+(\alpha^6+\alpha^5) & x^{32}/(x^2+\alpha x+\alpha^7) \\ \hline
(\alpha^6+\alpha^5)x^{254}+\ldots+(\alpha^7+\alpha^6+\alpha^2) & (x^{88}+1)/(x^2+x+\alpha^5)
\end{array}
\end{equation*}
\end{table}

% ... and so on

% These commands fix an odd problem in which the bibliography line
% of the Table of Contents shows the wrong page number.
\clearpage
\phantomsection

% "References should be formatted in style most common in discipline",
% abbrv is only a suggestion.
\bibliographystyle{abbrv}
\bibliography{Lukethesis}

% The Thesis Manual says not to include appendix figures and tables in
% the List of Figures and Tables, respectively, so these commands from
% the caption package turn it off from this point onwards. If needed,
% it can be re-enabled later (using list=yes argument).
%\captionsetup[figure]{list=no}
%\captionsetup[table]{list=no}

% If you have an appendix, it should come after the references.
%\include{appendix}

\end{document}